\newtheorem{lemma}{Lemma}[section]
\newtheorem{theorem}[lemma]{Theorem}
\newtheorem{proposition}[lemma]{Proposition}
\newtheorem{definition}[lemma]{Definition}
\newtheorem{corollary}[lemma]{Corollary}
\begin{document}

\title{\textbf{Restricted hom-Lie algebras}
\author{ Baoling Guan$^{1,2}$,  Liangyun Chen$^{1}$
 \date{{\small {$^1$ School of Mathematics and Statistics, Northeast Normal
 University,\\
Changchun 130024, China}\\{\small {$^2$ College of Sciences, Qiqihar
University, Qiqihar 161006, China}}}}}}

\date{ }
\maketitle
\begin{quotation}
\small\noindent \textbf{Abstract}: The paper studies the structure
of restricted hom-Lie algebras. More specifically speaking, we first
give the equivalent definition of restricted hom-Lie algebras. Second, we obtain some
properties of $p$-mappings and restrictable hom-Lie algebras.
Finally, the cohomology of restricted hom-Lie algebras is researched.

\noindent{\textbf{Keywords}}: Restricted hom-Lie algebras; restrictable hom-Lie algebras; $p$-mapping; cohomology

 \small\noindent \textbf{Mathematics
Subject Classification 2010}: 17A60, 17B56, 17B50
\renewcommand{\thefootnote}{\fnsymbol{footnote}}
 \footnote[0]{Corresponding
author(L. Chen): chenly640@nenu.edu.cn.}
 \footnote[0]{Supported by NNSF
of China (No.11171055),  Natural Science Foundation of Jilin
province (No. 201115006), Scientific Research Foundation for
Returned Scholars
    Ministry of Education of China and the Fundamental Research Funds for the Central Universities(No. 11SSXT146).}
\end{quotation}
\setcounter{section}{0}

\section{Introduction}

    The concept of a  restricted Lie algebra is
       attributable to N. Jacobson  in 1943. It is well known that  the Lie algebras
       associated with algebraic groups   over a field of  characteristic $p$  are
       restricted Lie algebras \cite{sf}. Now,
   restricted theories attract more and more attentions.
 For example:  restricted Lie superalgebras\cite{pe},  restricted
       Lie color algebras\cite{ba}, restricted
        Leibniz  algebras\cite{dl},  restricted Lie triple systems\cite{htl} and
     restricted Lie algebras \cite{etj} were studied, respectively.

   However, The notion of hom-Lie algebras was introduced by Hartwig, Larsson and Silvestrov in \cite{hls}
 as part of a study of deformations of the Witt and the Virasoro algebras. In
a hom-Lie algebra, the Jacobi identity is twisted by a linear map, called the hom-Jacobi identity.
Some q-deformations of the Witt and the Virasoro algebras have the
structure of a hom-Lie algebra \cite{hls}. Because of close relation to discrete and deformed
vector fields and differential calculus \cite{hls,ld,ldd}, hom-Lie algebras are widely studied
recently \cite{af,bs,ma,mas,ydd,yddd,dy}.
   As   a natural generalization of a restricted Lie  algebra, it seems desirable to investigate the possibility of establishing
 a parallel theory  for  restricted  hom-Lie algebras.  As is well
known,  restricted Lie  algebras play predominant roles in the
theories of modular Lie algebras \cite{sf1}. Analogously,  the study
of restricted hom-Lie algebras  will play
      an important role in
    the  classification  of the  finite-dimensional modular simple hom-Lie algebras.

 The paper study the structure of
restricted hom-Lie algebras. Let us briefly
describe the content and  setup of the present article. In Sec. 2,
the equivalent definition of restricted hom-Lie algebras is given.
   In Sec. 3, we obtain some properties of $p$-mappings and restrictable hom-Lie algebras.
  In Sec. 4, we research the cohomology of restricted hom-Lie algebras.

 In the paper, $\mathbb{F}$ is a field of prime characteristic. Let $L$ denote a finite-dimensional restricted hom-Lie
 algebra over $\mathbb{F}.$

\begin{definition}{\rm \cite{sf}}\,  Let $L$ be a Lie algebra over $\mathbb{F}.$ A mapping
$[p]:L\rightarrow L, a\mapsto a^{[p]}$ is called a $p$-mapping, if

$\mathrm{(1)}$  $\mathrm{ad}{a^{[p]}}=(\mathrm{ad}{a})^{p},\ \forall a\in L,$

$\mathrm{(2)}$  $(\alpha a)^{[p]}=k^{p}a^{[p]},\ \forall a\in L, k\in \mathbb{F},$

$\mathrm{(3)}$
$(a+b)^{[p]}=a^{[p]}+b^{[p]}+\sum\limits_{i=1}^{p-1}s_{i}(a,b),$\\
where $ (\mathrm{ad}{(a\otimes X+b\otimes 1)})^{p-1}(a\otimes 1)=\sum\limits_{i=1}^{p-1}is_{i}(a,b)\otimes X^{i-1} $ in $L\otimes_{\mathbb{F}}\mathbb{F}[X], \forall a,b\in L,$
The pair $(L,[p])$ is referred to as a restricted Lie algebra.
\end{definition}

\begin{definition} {\rm\cite{syh}}\, $\mathrm{(1)}$ A hom-Lie algebra is a triple $(L, [.,.]_{L}, \alpha)$
consisting of a linear space $L,$ a skew-symmetric bilinear map $[.,.]_{L}: \Lambda^{2}L\rightarrow L$ and a linear map
$\alpha: L\rightarrow L$ satisfying the following hom-Jacobi identity:
$$[\alpha(x), [y, z]_{L}]_{L}+[\alpha(y), [z, x]_{L}]_{L}+[\alpha(z), [x, y]_{L}]_{L}=0$$
for all $x, y, z\in L;$

$\mathrm{(2)}$ A hom-Lie algebra is called a multiplicative hom-Lie algebra if $\alpha$ is an algebraic morphism, i.e., for any $x,y\in L,$
we have $\alpha([x,y]_{L})=[\alpha(x),\alpha(y)]_{L};$

$\mathrm{(3)}$ A sub-vector space $\eta\subset L$ is called a  hom-Lie subalgebra of $(L,[.,.]_{L},\alpha)$ if $\alpha(\eta)\subset \eta$ and $\eta$ is closed under
the bracket operation $[.,.]_{L},$ i.e., $[x,y]_{L}\in \eta$ for all $x,y\in \eta;$

$\mathrm{(4)}$ A sub-vector space $\eta\subset L$ is called a  hom-Lie ideal of $(L,[.,.]_{L},\alpha)$ if $\alpha(\eta)\subset \eta$ and $[x,y]_{L}\in \eta$ for all $x\in \eta, y\in L.$
\end{definition}

\section{The equivalent definition of restricted hom-Lie algebras}

 Let $(L, [,]_{L},\alpha)$ be a multiplicative hom-Lie algebra over $\mathbb{F}.$ For $c\in L$ satisfying $\alpha(c)=c,$ we define $\mathrm{ad}c(a):=[\alpha(a), c].$ Put $L_{0}:=\{x|\alpha(x)\neq x\}\cup\{0\}$ and $L_{1}:=\{x|\alpha(x)=x\}.$ Then $L=L_{0}\cup L_{1}$ and $ L_{1}$ is a hom-Lie subalgebra of $L.$

\begin{definition} \label{d1.1.18}\,   Let $(L, [,]_{L},\alpha)$ be a multiplicative hom-Lie algebra over $\mathbb{F}.$  A mapping
$[p]:L_{1}\rightarrow L_{1}, a\mapsto a^{[p]}$ is called a $p$-mapping, if

$\mathrm{(1)}$  $[\alpha(y), x^{[p]}]=(\mathrm{ad}x)^{p}(y),\ \forall x\in L_{1}, y\in L,$

$\mathrm{(2)}$  $(k x)^{[p]}=k^{p}x^{[p]},\ \forall x\in L_{1}, k\in \mathbb{F},$

$\mathrm{(3)}$
$(x+y)^{[p]}=x^{[p]}+y^{[p]}+\sum\limits_{i=1}^{p-1}s_{i}(x,y),$\\
where $ (\mathrm{ad}(x\otimes X+y\otimes 1))^{p-1}(x\otimes
1)=\sum\limits_{i=1}^{p-1}is_{i}(x,y)\otimes X^{i-1} $ in
$L\otimes_{\mathbb{F}}\mathbb{F}[X], \forall x,y\in L_{1}, \alpha(x\otimes X)=\alpha(x)\otimes X.$ The pair
$(L, [,]_{L},\alpha, [p])$ is referred to as a restricted hom-Lie algebra.
\end{definition}
From the above definition, we may see that
(i) $\alpha(x^{[p]})=(\alpha(x))^{[p]}$ for all $x\in L_{1},$ i.e., $\alpha\circ [p]=[p]\circ \alpha;$
(ii) By (1) of the definition, one gets $\mathrm{ad}x^{[p]}=(\mathrm{ad}x)^{p}$ for all $x\in L_{1}.$

Let $(L,\alpha)$ be a hom-Lie algebra over $\mathbb{F}$ and $f:L\rightarrow L$ be a mapping. $f$ is called a $p$-semilinear mapping, if
 $f(k x+y)=k^{p}f(x)+f(y), \ \forall x,y \in L, \ \forall k\in \mathbb{F}.$ Let $S$ be a subset of a
hom-Lie algebra $(L,\alpha).$ We put $C_{L}(S):=\{x\in L|\ [\alpha(y),x]=0, \forall y\in S\}.$
$C_{L}(S)$ is called the centralizer of $S$ in $L$.
Put $C(L):=\{x\in L|\ [\alpha(y),x]=0, \forall y\in L\}.$ $C(L)$ is called the center of $L.$

\begin{definition} Let $(L,[.,.]_{L},\alpha)$ be a restricted hom-Lie algebra over $\mathbb{F}.$ A hom-Lie subalgebra $H$ of $L$ is called a $p$-subalgebra,
if $x^{[p]}\in H_{1}$ for all $x\in H_{1},$ where $H_{1}=\{x\in H|\alpha(x)=x\}.$
\end{definition}
\begin{proposition} \label{p1.3.1} Let $L$ be a hom-Lie subalgebra of a restricted hom-Lie
algebra $(G,[,]_{G},\alpha, [p])$ and $[p]_{1}:L_{1}\rightarrow L_{1}$ a mapping. Then the following
statements are equivalent:

$\mathrm{(1)}$ $[p]_{1}$ is a $p$-mapping on $L_{1}.$

$\mathrm{(2)}$ There exists a $p$-semilinear mapping $f:L_{1}\rightarrow C_{G}(L)$
such that $[p]_{1}=[p]+f.$
\end{proposition}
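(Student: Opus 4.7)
The plan is to establish the two implications separately, and in both cases the correction term will be $f(x)=x^{[p]_1}-x^{[p]}$. The direction $(2)\Rightarrow(1)$ will amount to substituting $[p]_1=[p]+f$ into each of the three axioms of Definition~\ref{d1.1.18} and observing that the extra $f$-terms either vanish (for axiom~(1), since $f(x)\in C_G(L)$ kills $[\alpha(y),f(x)]$) or combine correctly (for axioms~(2) and~(3), by $p$-semilinearity of $f$). The key observation making axiom~(3) work is that the Jacobson polynomials $s_i(x,y)$ are identical whether computed inside $L$ or inside $G$, because they depend only on the bracket and $L$ inherits its bracket from $G$; consequently they appear with the same coefficient on both sides of the subtraction and play no role beyond a formal marker.

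For the reverse direction $(1)\Rightarrow(2)$, I will set $f(x):=x^{[p]_1}-x^{[p]}$ for $x\in L_1$ and prove it has the two required properties. First, subtracting axiom~(1) for $[p]$ from axiom~(1) for $[p]_1$ gives $[\alpha(y),f(x)]=(\mathrm{ad}\,x)^p(y)-(\mathrm{ad}\,x)^p(y)=0$ for every $y\in L$, placing $f(x)$ in $C_G(L)$. Second, subtracting the corresponding axiom-(2) and axiom-(3) identities for the two $p$-mappings yields $p$-semilinearity: in the axiom-(3) calculation the $s_i(x,y)$-sums cancel by the bracket-inheritance observation above, leaving the clean identity $f(kx+y)=k^pf(x)+f(y)$.

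The step I expect to require the most care is the hom-setting bookkeeping. Since a $p$-mapping is defined only on the fixed-point subspace $L_1$ and since axiom~(1) of Definition~\ref{d1.1.18} involves $\alpha$ inside the bracket, I must verify that $\alpha$ restricts compatibly to $L$ and acts as the identity on $L_1=L\cap G_1$; both facts are immediate from the definition of a hom-Lie subalgebra together with the fixed-point description of $L_1$, but they must be invoked to know that $f$ is a well-defined map $L_1\to G$ and that the two instances of axiom~(1) being subtracted live in a common ambient bracket. Beyond this, the proof is a direct hom-analogue of the classical statement for ordinary restricted Lie algebras, and no genuinely new input is needed.
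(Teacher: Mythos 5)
Your proposal is correct and follows essentially the same route as the paper: both directions use the correction term $f(x)=x^{[p]_{1}}-x^{[p]}$, with axiom (1) forcing $f(x)\in C_{G}(L)$, the cancellation of the $s_{i}(x,y)$ terms giving $p$-semilinearity, and the converse checked by direct substitution into the three axioms. Your explicit remarks on the bracket-inheritance of the $s_{i}$ and the $\alpha$-bookkeeping on $L_{1}$ are points the paper uses only implicitly, but they do not change the argument.
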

\begin{proof}
(1)$\Rightarrow $(2). Consider $f:L_{1}\rightarrow G,$ $f(x)=x^{[p]_{1}}-x^{[p]}.$
Since $\mathrm{ad}f(x)(y)=[\alpha(y), f(x)]=0, \forall x\in L_{1}, y\in L,$ $f$ actually maps $L_{1}$ into $C_{G}(L).$
For $x,y\in L_{1}, k\in \mathbb{F},$ we obtain
\begin{eqnarray*}&&f(k x+y)\\
&&=k^{p}x^{[p]_{1}}+y^{[p]_{1}}+\sum_{i=1}^{p-1}s_{i}(k x,y)-k^{p}x^{[p]}-y^{[p]}-\sum_{i=1}^{p-1}s_{i}(k x,y)\\
&&=k^{p}f(x)+f(y),
\end{eqnarray*}
which proves that $f$ is $p$-semilinear.

(2)$\Rightarrow $(1). We next will check three conditions of the definition step and step.
For $x,y\in L_{1},$ we have
\begin{eqnarray*}&&
(x+y)^{[p]_{1}}
=(x+y)^{[p]}+f(x+y)\\
&&=x^{[p]}+f(x)+y^{[p]}+f(y)+\sum_{i=1}^{p-1}s_{i}(x,y)\\
&&=x^{[p]_{1}}+y^{[p]_{1}}+\sum_{i=1}^{p-1}s_{i}(x,y)
\end{eqnarray*}
and
\begin{eqnarray*}&&
(k x)^{[p]_{1}}=(k x)^{[p]}+f(k x)\\
&&=k^{p}x^{[p]}+k^{p}f(x)\\
&&=k^{p}(x^{[p]}+f(x))\\
&&=k^{p}x^{[p]_{1}}.
\end{eqnarray*}
 For $x\in L_{1}, z\in L,$ one gets
 \begin{eqnarray*}&&
\mathrm{ad}x^{[p]_{1}}(z)=\mathrm{ad}(x^{[p]}+f(x))(z)\\
&&=\mathrm{ad}x^{[p]}(z)+\mathrm{ad}f(x)(z)\\
&&=\mathrm{ad}x^{[p]}(z)\\
&&=(\mathrm{ad}x)^{p}(z).
\end{eqnarray*}
The proof is complete.
\end{proof}

\begin{corollary} \label{c1.3.3} The following statements hold.

$\mathrm{(1)}$ If $C(L)=0,$ then $L$ admits at most one $p$-mapping.

$\mathrm{(2)}$ If two $p$-mappings coincide on a basis, then they are equal.

$\mathrm{(3)}$ If $(L, [,]_{L},\alpha, [p])$ is restricted, then there exists a $p$-mapping $[p]^{'}$ of $L$ such that
$x^{[p]^{'}}=0, \ \forall x\in C(L_{1}).$
\end{corollary}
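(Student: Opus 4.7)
The plan is to derive all three parts from Proposition~\ref{p1.3.1} applied with $G=L$, so that any two $p$-mappings on $L$ differ by a $p$-semilinear map $f:L_1\to C_L(L)=C(L)$. Part~(1) then follows at once, since $C(L)=0$ forces $f=0$. For part~(2), iterating the defining relation $f(kx+y)=k^pf(x)+f(y)$ gives $f\bigl(\sum_i k_ie_i\bigr)=\sum_i k_i^p f(e_i)$, so a $p$-semilinear map is determined by its values on any basis of $L_1$; if two $p$-mappings agree on a basis, the associated $f$ vanishes on that basis and hence identically.

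For part~(3), I would fix a basis $\{e_j\}_{j\in J}$ of $C(L_1)$, extend it to a basis $\{e_i\}_{i\in I}$ of $L_1$, and define a $p$-semilinear map $f:L_1\to L$ on the basis by
\[
f(e_j)=-e_j^{[p]}\ \ (j\in J),\qquad f(e_i)=0\ \ (i\in I\setminus J),
\]
extending to all of $L_1$ via $f\bigl(\sum_i k_ie_i\bigr)=\sum_i k_i^p f(e_i)$. By Proposition~\ref{p1.3.1}, the map $[p]':=[p]+f$ is then a genuine $p$-mapping of $L$ provided $f$ takes its values in $C(L)$, and by construction $x^{[p]'}=0$ for every $x\in C(L_1)$.

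The main obstacle is therefore the verification that $f(L_1)\subseteq C(L)$, which reduces to showing $e_j^{[p]}\in C(L)$ for each $j\in J$. For this I would use the identity $\mathrm{ad}\,e_j^{[p]}=(\mathrm{ad}\,e_j)^p$ recorded after Definition~\ref{d1.1.18}: centrality of $e_j$ in $L_1$ makes $\mathrm{ad}\,e_j$ vanish on $L_1$, and the task is then to propagate the vanishing of $(\mathrm{ad}\,e_j)^p$ to all of $L$ using multiplicativity of $\alpha$ together with the hom-Jacobi identity. This is the only nontrivial computation behind the corollary; once it is in place, all three statements follow mechanically from Proposition~\ref{p1.3.1}.
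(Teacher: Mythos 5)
Parts (1) and (2) are correct and are exactly the paper's argument: apply Proposition \ref{p1.3.1} with $G=L$, so two $p$-mappings differ by a $p$-semilinear $f:L_1\to C(L)$, which is zero when $C(L)=0$ or when it vanishes on a basis. For part (3), your construction is, up to sign, also the paper's: the paper restricts $[p]$ to $C(L_1)$, observes that this restriction is $p$-semilinear because $C(L_1)$ is abelian (so all the $s_i$ terms vanish), extends it to a $p$-semilinear $f:L_1\to C(L_1)$ --- your basis-wise definition is precisely such an extension --- and sets $[p]'=[p]-f$. One small omission: your assertion that $x^{[p]'}=0$ for \emph{every} $x\in C(L_1)$ ``by construction'' is immediate only for basis vectors; for $x=\sum_j k_je_j$ you still need $x^{[p]}=\sum_j k_j^pe_j^{[p]}$, which is exactly the $p$-semilinearity of $[p]$ on the abelian subalgebra $C(L_1)$ that the paper records and you skip. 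That is easily repaired.

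The genuine gap is the one you yourself flag and then defer: for Proposition \ref{p1.3.1} to certify $[p]'$ as a $p$-mapping, $f$ must take values in $C_L(L)=C(L)$, i.e.\ you need $e_j^{[p]}\in C(L)$, equivalently $(\mathrm{ad}\,e_j)^p=0$ as an operator on all of $L$, for each basis vector $e_j$ of $C(L_1)$. Your sketch (``propagate the vanishing using multiplicativity and hom-Jacobi'') does not obviously succeed: centrality of $e_j$ in $L_1$ gives only $\mathrm{ad}\,e_j|_{L_1}=0$, and since $\mathrm{ad}\,e_j(z)=[\alpha(z),e_j]$ need not land in $L_1$ (that would require $[\alpha^2(z),e_j]=[\alpha(z),e_j]$), the vanishing does not automatically transfer to the iterates $(\mathrm{ad}\,e_j)^k(z)$ for $z\in L_0$. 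So as written your part (3) is an incomplete proof. In fairness, you have located a real soft spot: the paper's own proof takes $f$ with values in $C(L_1)$ and invokes Proposition \ref{p1.3.1}, which formally demands values in $C(L)$, so the published argument glosses over exactly the verification you leave open; but identifying the obstacle is not the same as removing it.
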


\begin{proof}
(1) We set $G=L.$ Then $C_{G}(L)=C(L),$ the only $p$-semilinear mapping
occurring in Proposition \ref{p1.3.1} is the zero mapping.

(2) If two $p$-mappings coincide on a basis, their difference vanishes since it is $p$-semilinear.

(3) $[p]|_{C(L_{1})}$ defines a $p$-mapping on $C(L_{1}).$ Since $C(L_{1})$ is abelian, it is $p$-semilinear.
Extend this to a $p$-semilinear mapping $f:L_{1}\rightarrow C(L_{1}).$ Then $[p]^{'}:=[p]-f$ is a $p$-mapping
of $L,$ vanishing on $C(L_{1}).$
\end{proof}

From the proof of Theorem 2 in \cite{dy}, we see the following definition:
\begin{definition} \label{d1.3.4001}
Let $(L, [.,.]_{L}, \alpha_{L})$ be a hom-Lie algebra, and let $j : L\rightarrow U_{HLie}(L)$ be the composition
of the maps $L
\hookrightarrow F_{HNAs}(L)\twoheadrightarrow U_{HLie}(L).$ The pair $(U_{HLie}(L),j)$ is called a universal enveloping algebra of $L$ if for every hom-associative
algebra $(A, \mu_{A}, \alpha_{A})$ and
every morphism $f : L\rightarrow HLie(A)$ of hom-Lie algebras, there exists a unique morphism $h: U_{HLie}(L)\rightarrow A$ of hom-associative
algebras such that $f = h \circ j$ (as morphisms of $\mathbb{F}$-modules).
\end{definition}
In the special case of $G=U_{HLie}(L)^{-}\supset L,$ where $U_{HLie}(L)$ is the universal enveloping algebra of hom-Lie algebra $L$ (see \cite{dy}) and
$U_{HLie}(L)^{-}$ denotes a hom-Lie algebra given by hom-associative
algebra $U_{HLie}(L)$ via the commutator bracket. We have the following theorem:

\begin{theorem} \label{t1.3.4} Let $(e_{j})_{j\in J}$ be a basis of $L_{1}$
such that there are $y_{j}\in L_{1}$ with $(\mathrm{ad}e_{j})^{p}=\mathrm{ad}y_{j}.$ Then there exists exactly
one $p$-mapping $[p]:L_{1}\rightarrow L$ such that $e_{j}^{[p]}=y_{j},\forall j\in J.$
\end{theorem}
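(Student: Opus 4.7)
The plan is to apply Proposition~\ref{p1.3.1} with $G=U_{HLie}(L)^{-}$, the hom-Lie algebra underlying the universal enveloping hom-associative algebra of $L$ (the setting flagged in the remark preceding the theorem). Uniqueness is handled first and is immediate from Corollary~\ref{c1.3.3}(2): any two $p$-mappings on $L_{1}$ that agree on the basis $(e_{j})_{j\in J}$ coincide, so at most one $p$-mapping can satisfy $e_{j}^{[p]}=y_{j}$.

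For existence, I would first establish (or invoke from the theory of hom-associative enveloping algebras) that $G$ itself is a restricted hom-Lie algebra with $p$-mapping given by the associative $p$-th power $x\mapsto x^{p}$. Since $\alpha$ is multiplicative on $U_{HLie}(L)$, it restricts to an $\mathbb{F}$-algebra endomorphism, which forces $L_{1}\subset G_{1}$ and makes $x\mapsto x^{p}$ well defined on $G_{1}$; the identity $\mathrm{ad}(x^{p})=(\mathrm{ad}\,x)^{p}$ together with the hom-version of Jacobson's formula for $(x+y)^{p}$ produce the three axioms of Definition~\ref{d1.1.18}.

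Granting this, I set $z_{j}:=y_{j}-e_{j}^{p}\in G$ for every $j\in J$. The hypothesis $(\mathrm{ad}\,e_{j})^{p}=\mathrm{ad}\,y_{j}$ on $L$ together with $\mathrm{ad}(e_{j}^{p})=(\mathrm{ad}\,e_{j})^{p}$ in $G$ gives $\mathrm{ad}(z_{j})=0$ on $L$, so $z_{j}\in C_{G}(L)$; multiplicativity of $\alpha$ moreover yields $\alpha(z_{j})=z_{j}$. I then define $f:L_{1}\to C_{G}(L)$ as the unique $p$-semilinear extension of the assignment $e_{j}\mapsto z_{j}$. This is well defined since $(e_{j})_{j\in J}$ is an $\mathbb{F}$-basis of the subspace $L_{1}\subset L$ and $C_{G}(L)$ is an $\mathbb{F}$-subspace of $G$; by construction $f(\sum k_{j}e_{j})=\sum k_{j}^{p}z_{j}$.

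Applying Proposition~\ref{p1.3.1} to this $f$ produces a $p$-mapping $[p]$ on $L_{1}$ of the form $x^{[p]}=x^{p}+f(x)$, and evaluating on basis vectors gives $e_{j}^{[p]}=e_{j}^{p}+z_{j}=y_{j}$, as required. The main obstacle I anticipate is the first step — a careful verification that $U_{HLie}(L)^{-}$ genuinely carries the structure of a restricted hom-Lie algebra with $x\mapsto x^{p}$ as $p$-mapping, because this is where the interaction of the twist $\alpha$ with associative $p$-th powers must be controlled; once that is in hand, the remainder is essentially a direct packaging of Proposition~\ref{p1.3.1}.
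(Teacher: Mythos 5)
Your proposal follows essentially the same route as the paper: work inside $G=U_{HLie}(L)^{-}$ with the associative $p$-th power as the reference $p$-mapping, observe that $y_{j}-e_{j}^{p}$ centralizes $L$, extend $e_{j}\mapsto y_{j}-e_{j}^{p}$ $p$-semilinearly to a map $f:L_{1}\rightarrow C_{G}(L)$, and then invoke Proposition~\ref{p1.3.1} for existence and Corollary~\ref{c1.3.3} for uniqueness. The only step you leave implicit, which the paper makes explicit, is that $x^{p}+f(x)$ lies in $L_{1}$ for \emph{every} $x\in L_{1}$ and not merely for the basis vectors (a priori it only lies in $G$, while the theorem asserts a map into $L$); the paper settles this by showing that $V=\{x\in L_{1}\mid x^{p}+f(x)\in L_{1}\}$ is a subspace, via Jacobson's formula for $(kx+y)^{p}$, and noting that $V$ contains the basis --- a routine addition that does not change your argument.
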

\begin{proof}
For $z\in L_{1},$ we have
$0=((\mathrm{ad}e_{j})^{p}-\mathrm{ad}y_{j})(z)=[\alpha(z), e^{p}_{j}-y_{j}].$ Then
$e^{p}_{j}-y_{j}\in C_{U_{HLie}(L_{1})}{(L_{1})}, \forall j\in J.$ We define a
$p$-semilinear mapping $f:L_{1}\rightarrow C_{U_{HLie}(L_{1})}{(L_{1})}$ by means of
$$f(\sum \alpha_{j}e_{j}):=\sum\alpha_{j}^{p} (y_{j}-e^{p}_{j}).$$
Consider $V:=\{x\in L_{1}| x^{p}+f(x)\in L_{1}\}.$ The equation
$$(k x+y)^{p}+f(k x+y)=k^{p}x^{p}+y^{p}+\sum^{p-1}_{i=1}s_{i}(k x,y)+k^{p}f(x)+f(y)$$
ensures that $V$ is a subspace of $L_{1}.$ Since it contains the basis
$(e_{j})_{j\in J},$ we conclude that $x^{p}+f(x)\in L_{1},\ \forall x\in L_{1}.$
By virtue of Proposition \ref{p1.3.1},  $[p]:L_{1}\rightarrow L, x^{[p]}:=x^{p}+f(x)$ is a
$p$-mapping on $L_{1}.$ In addition, we obtain
$e^{[p]}_{j}=e^{p}_{j}+f(e_{j})=y_{j},$ as asserted. The
uniqueness of $[p]$ follows from Corollary \ref{c1.3.3}.
\end{proof}

\begin{definition} \label{d1.3.400} A multiplicative hom-Lie algebra $(L, [.,.]_{L}, \alpha_{L})$ is called restrictable, if $(\mathrm{ad}x)^{p}\in \mathrm{ad}{L_{1}}$ for all $x\in L_{1},$ where $\mathrm{ad}{L_{1}}=\{\mathrm{ad}x | x\in L_{1}\}.$
\end{definition}
\begin{theorem} \label{t1.3.400} $L$ is a restrictable hom-Lie algebra if and only if there is a $p$-mapping $[p]:L_{1}\rightarrow L_{1}$
which makes $L$ a  restricted hom-Lie algebra.
\end{theorem}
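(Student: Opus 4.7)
The plan is to prove the two directions separately, and in both cases the work is essentially a direct appeal to material already established in the excerpt.

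For the forward direction ($L$ restrictable $\Rightarrow$ $L$ admits a $p$-mapping), I would choose a basis $(e_j)_{j\in J}$ of $L_1$. By the definition of restrictable, for each basis element $e_j \in L_1$ there exists some $y_j \in L_1$ such that $(\mathrm{ad}e_j)^p = \mathrm{ad}y_j$. At this point the hypotheses of Theorem \ref{t1.3.4} are exactly satisfied, so I can invoke it to produce a $p$-mapping $[p]:L_1 \to L_1$ with $e_j^{[p]} = y_j$, turning $(L,[.,.]_L,\alpha,[p])$ into a restricted hom-Lie algebra.

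For the reverse direction, suppose $(L,[.,.]_L,\alpha,[p])$ is restricted. Then for any $x\in L_1$ and any $y\in L$, condition (1) of Definition \ref{d1.1.18} together with the convention $\mathrm{ad}c(a) = [\alpha(a),c]$ gives
\begin{equation*}
(\mathrm{ad}x^{[p]})(y) = [\alpha(y), x^{[p]}] = (\mathrm{ad}x)^p(y),
\end{equation*}
so that $(\mathrm{ad}x)^p = \mathrm{ad}x^{[p]}$. Since $[p]$ has codomain $L_1$, the element $x^{[p]}$ lies in $L_1$, hence $(\mathrm{ad}x)^p \in \mathrm{ad}L_1$. This is precisely the condition in Definition \ref{d1.3.400}, so $L$ is restrictable.

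I do not expect any genuine obstacle: both directions follow formally from previously proved material. The only subtle point worth checking is that the $p$-mapping constructed via Theorem \ref{t1.3.4} actually takes values in $L_1$ (and not merely in $L$), but this is already built into the conclusion of that theorem since every $y_j$ was chosen in $L_1$ and the subspace $V \subseteq L_1$ appearing in its proof turns out to be all of $L_1$. Thus the argument reduces to an application of Theorem \ref{t1.3.4} one way and a direct unpacking of Definition \ref{d1.1.18}(1) the other way.
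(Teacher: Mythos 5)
Your proof is correct and follows essentially the same route as the paper: the forward direction applies Theorem \ref{t1.3.4} to a basis of $L_{1}$, and the reverse direction unpacks condition (1) of Definition \ref{d1.1.18} to get $(\mathrm{ad}x)^{p}=\mathrm{ad}x^{[p]}\in\mathrm{ad}L_{1}$. Your side remark about checking that the mapping from Theorem \ref{t1.3.4} lands in $L_{1}$ is a worthwhile observation, since that theorem is stated with codomain $L$, but its proof does establish $x^{p}+f(x)\in L_{1}$, exactly as you note.
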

\begin{proof} $(\Leftarrow)$ By the definition of $p$-mapping $[p],$ for $x\in L_{1},$ there exists $x^{[p]}\in L_{1}$
such that $(\mathrm{ad}x)^{p}=\mathrm{ad}x^{[p]}\in \mathrm{ad}L_{1}.$ Hence $L$ is restrictable.

$(\Rightarrow)$ Let $L$ be restrictable. Then for $x\in L_{1},$  we have $(\mathrm{ad}{x})^{p}\in \mathrm{ad}L_{1},$
that is, there exists $y\in L_{1}$  such that $(\mathrm{ad}{x})^{p}=\mathrm{ad}{y}.$
Let $(e_{j})_{j\in J}$ be a basis of $L_{1}.$ Then there exists $y_{j}\in L_{1}$ such that $(\mathrm{ad}e_{j})^{p}=\mathrm{ad}y_{j}(j\in J).$
By Theorem  \ref{t1.3.4}, there exists exactly one $p$-mapping $[p]:L_{1}\rightarrow L_{1}$ such that
$e_{j}^{[p]}=y_{j}, \forall j\in J,$ which makes $L$ a restricted  hom-Lie algebra.
\end{proof}

\section{Properties of $p$-mappings and restrictable hom-Lie algebras}
In the section, we will discuss some properties of $p$-mappings and restrictable hom-Lie algebras.
\begin{definition}{\rm\cite{syh}}\, Let $(L,[\cdot,\cdot]_L,\alpha)$ and $(\Gamma,[\cdot,\cdot]_\Gamma,\beta)$ be two hom-Lie algebras. A linear map $\phi:L \rightarrow \Gamma$ is said to be a morphism of hom-Lie algebras if \\
\begin{equation} \phi[u,v]_L=[\phi(u),\phi(v)]_\Gamma,\qquad\forall  u,v\in L,\label{1}\end{equation}
\begin{equation}\phi\circ\alpha=\beta\circ\phi.\qquad\qquad\qquad\qquad\label{2}\end{equation}

 Denote by $\mathfrak{G}_\phi=\{(x,\phi(x))|x\in L\}\subseteq L\oplus\Gamma$ the graph of a linear map $\phi:L \rightarrow \Gamma$.
\end{definition}

\begin{definition}A morphism of hom-Lie algebras $\phi:(L,[\cdot,\cdot]_L,\alpha,[p]_{1})\rightarrow(\Gamma,[\cdot,\cdot]_\Gamma,\beta,[p]_{2})$ is said to be restricted if
$\phi(x^{[p]_{1}})=(\phi(x))^{[p]_{2}}$ for all $x\in L.$
\end{definition}

\begin{proposition}\label{proposition2.1}
Given two restricted hom-Lie algebras $(L,[\cdot,\cdot]_L,\alpha,[p]_{1})$ and $(\Gamma,[\cdot,\cdot]_\Gamma,\beta,$\\$[p]_{2})$, there is a restricted hom-Lie algebra
$(L\oplus\Gamma,[\cdot,\cdot]_{L\oplus\Gamma},\alpha+\beta,[p])$, where the bilinear map $[\cdot,\cdot]_{L\oplus\Gamma}:{\wedge}^2(L\oplus\Gamma)\rightarrow L\oplus\Gamma$ is given by
$${[u_1+v_1,u_2+v_2]}_{L\oplus\Gamma}={[u_1,u_2]}_L+{[v_1,v_2]}_\Gamma, \qquad \forall  u_1,u_2\in L,  v_1,v_2 \in \Gamma, $$
and the linear map $(\alpha+\beta):L\oplus\Gamma \rightarrow L\oplus\Gamma$ is given by
$$(\alpha+\beta)(u+v)=\alpha(u)+\beta(v), \qquad \forall  u\in L, v\in \Gamma,$$
 the $p$-mapping $[p]: L\oplus\Gamma \rightarrow L\oplus\Gamma$ is given by
$$(u+v)^{[p]}=u^{[p]_{1}}+v^{[p]_{2}}, \qquad \forall  u\in L, v\in \Gamma.$$
\end{proposition}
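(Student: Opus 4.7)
The plan is to verify in order that $(L\oplus\Gamma,[\cdot,\cdot]_{L\oplus\Gamma},\alpha+\beta)$ is a multiplicative hom-Lie algebra, that the map $[p]$ is well-defined on $(L\oplus\Gamma)_1$, and that it satisfies the three axioms of Definition~\ref{d1.1.18}. The first step is routine: skew-symmetry of $[\cdot,\cdot]_{L\oplus\Gamma}$ is immediate, the hom-Jacobi identity splits into the $L$-part and the $\Gamma$-part since the bracket is purely componentwise, and multiplicativity $(\alpha+\beta)([u_1+v_1,u_2+v_2]_{L\oplus\Gamma})=[(\alpha+\beta)(u_1+v_1),(\alpha+\beta)(u_2+v_2)]_{L\oplus\Gamma}$ follows directly from the multiplicativity of $\alpha$ and $\beta$ separately.

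Next I would identify the fixed-point space explicitly: since $(\alpha+\beta)(u+v)=\alpha(u)+\beta(v)$ equals $u+v$ iff $\alpha(u)=u$ and $\beta(v)=v$, one has $(L\oplus\Gamma)_1=L_1\oplus\Gamma_1$. Consequently $u^{[p]_1}+v^{[p]_2}$ lies in $L_1\oplus\Gamma_1=(L\oplus\Gamma)_1$, so $[p]$ sends $(L\oplus\Gamma)_1$ into itself and is well-defined.

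For axiom (2), $(k(u+v))^{[p]}=(ku)^{[p]_1}+(kv)^{[p]_2}=k^p(u^{[p]_1}+v^{[p]_2})=k^p(u+v)^{[p]}$, which is immediate. For axiom (1), the key observation is that, because the bracket on $L\oplus\Gamma$ has no cross terms, $\mathrm{ad}(u+v)$ acts as $\mathrm{ad}_L u$ on the $L$-summand and as $\mathrm{ad}_\Gamma v$ on the $\Gamma$-summand; in particular $(\mathrm{ad}(u+v))^p=(\mathrm{ad}_L u)^p\oplus(\mathrm{ad}_\Gamma v)^p$. Applying this to $y=u'+v'\in L\oplus\Gamma$ and using the $p$-mapping property of $[p]_1$ and $[p]_2$ yields $[(\alpha+\beta)(y),(u+v)^{[p]}]_{L\oplus\Gamma}=(\mathrm{ad}(u+v))^p(y)$.

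The main obstacle is axiom (3), which requires understanding how the Jacobson polynomials $s_i$ behave on the direct sum. The plan is to show that for $x=u_1+v_1,\,y=u_2+v_2\in(L\oplus\Gamma)_1$ one has
\begin{equation*}
s_i^{L\oplus\Gamma}(u_1+v_1,u_2+v_2)=s_i^{L}(u_1,u_2)+s_i^{\Gamma}(v_1,v_2),\qquad 1\le i\le p-1.
\end{equation*}
This follows by expanding the defining identity $(\mathrm{ad}((u_1+v_1)\otimes X+(u_2+v_2)\otimes 1))^{p-1}((u_1+v_1)\otimes 1)$ in $(L\oplus\Gamma)\otimes_{\mathbb{F}}\mathbb{F}[X]$, noting that the adjoint operators on the $L$-summand and the $\Gamma$-summand commute with each other (in fact each annihilates the other summand), so the left-hand side splits as the sum of the analogous expressions for $(u_1,u_2)$ in $L$ and $(v_1,v_2)$ in $\Gamma$. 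Comparing coefficients of $X^{i-1}$ gives the claimed identity. Granting this, the computation
\begin{equation*}
(x+y)^{[p]}=(u_1+u_2)^{[p]_1}+(v_1+v_2)^{[p]_2}=x^{[p]}+y^{[p]}+\sum_{i=1}^{p-1}s_i^{L\oplus\Gamma}(x,y)
\end{equation*}
is then immediate from the $p$-mapping property applied separately in $L$ and $\Gamma$, completing the verification.
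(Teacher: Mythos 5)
Your proof is correct and follows essentially the same componentwise verification as the paper's own argument. If anything you are slightly more careful: you explicitly identify $(L\oplus\Gamma)_1=L_1\oplus\Gamma_1$, check multiplicativity of $\alpha+\beta$, and justify the additivity $s_i^{L\oplus\Gamma}(u_1+v_1,u_2+v_2)=s_i^{L}(u_1,u_2)+s_i^{\Gamma}(v_1,v_2)$ by splitting the defining identity in $(L\oplus\Gamma)\otimes_{\mathbb{F}}\mathbb{F}[X]$ (using that each summand's adjoint annihilates the other and that $i$ is invertible in $\mathbb{F}$ for $1\le i\le p-1$), a step the paper merely asserts in its final display.
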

\begin{proof} Recall that $L_{1}=\{x\in L|\alpha(x)=x\}$ and $\Gamma_{1}=\{x\in \Gamma|\beta(x)=x\}.$
 For any  $u_1, u_2\in L, v_1, v_2\in \Gamma,$ we have
\begin{eqnarray*}
[u_2+v_2,u_1+v_1]_{L\oplus\Gamma}&=&{[u_2,u_1]}_L+{[v_2,v_1]}_\Gamma\\
&=&-{[u_1,u_2]}_L-{[v_1,v_2]}_\Gamma\\
&=&-[u_1+v_1,u_2+v_2]_{L\oplus\Gamma}.
\end{eqnarray*}
The bracket is obviously skew-symmetric. By a direct computation we
have
\begin{eqnarray*}
&&[(\alpha+\beta)(u_1+v_1),[u_2+v_2,u_3+v_3]_{L\oplus\Gamma}]_{L\oplus\Gamma}\\
&&+c.p.((u_1+v_1),(u_2+v_2),(u_3+v_3))\\
&=&[\alpha(u_1)+\beta(v_1),[u_2,u_3]_{L}+[v_2,v_3]_{\Gamma}]_{L\oplus\Gamma}+c.p.\\
&=&[\alpha(u_1),[u_2,u_3]_L]_L+c.p.(u_1,u_2,u_3)+[\beta(v_1),[v_2,v_3]_\Gamma]_\Gamma\\
&&+c.p.(v_1,v_2,v_3)\\
&=&0,
\end{eqnarray*}
where $c.p.(a,b,c)$ means the cyclic permutations of $a, b, c$.
For any  $u_1\in L_{1}, v_1\in \Gamma_{1}, u_2\in L, v_2\in \Gamma,$ we obtain
\begin{eqnarray*}
&&\mathrm{ad}(u_1+v_1)^{[p]}(u_2+v_2)=[(\alpha+\beta)(u_2+v_2),(u_1+v_1)^{[p]}]_{L\oplus\Gamma}\\
&&=[\alpha(u_2)+\beta(v_2),u_1^{[p]_{1}}+v_1^{[p]_{2}}]_{L\oplus\Gamma}\\
&&=[\alpha(u_2),u_1^{[p]_{1}}]_L+[\beta(v_2),v_1^{[p]_{2}}]_\Gamma\\
&&=\mathrm{ad}u_1^{[p]_{1}}(u_2)+\mathrm{ad}v_1^{[p]_{2}}(v_2)\\
&&=(\mathrm{adu_1})^{p}(u_2)+(\mathrm{adv_1})^{p}(v_2)
\end{eqnarray*}
and
\begin{eqnarray*}
&&(\mathrm{ad}(u_1+v_1))^{p}(u_2+v_2)\\
&&=[[[\alpha^{p}(u_2)+\beta^{p}(v_2),\overbrace{u_1+v_1],u_1+v_1],\cdots,u_1+v_1}^{p}]_{L\oplus\Gamma}\\
&&=[[[\alpha^{p}(u_2),\overbrace{u_1],u_1],\cdots,u_1}^{p}]_L+
[[[\beta^{p}(v_2),\overbrace{v_1],v_1],\cdots,v_1}^{p}]_\Gamma\\
&&=(\mathrm{adu_1})^{p}(u_2)+(\mathrm{adv_1})^{p}(v_2).
\end{eqnarray*}
Hence $\mathrm{ad}(u_1+v_1)^{[p]}(u_2+v_2)=(\mathrm{ad}(u_1+v_1))^{p}(u_2+v_2),$ thus $\mathrm{ad}(u_1+v_1)^{[p]}=(\mathrm{ad}(u_1+v_1))^{p}.$
Moreover, for any  $u_1, u_2\in L_{1}, v_1, v_2\in \Gamma_{1},$ one gets
\begin{eqnarray*}
&&((u_1+v_1)+(u_2+v_2))^{[p]}=((u_1+u_2)+(v_1+v_2))^{[p]}=(u_1+u_2)^{[p]_{1}}+(v_1+v_2)^{[p]_{2}}\\
&&=u_1^{[p]}+u_2^{[p]}+\sum\limits_{i=1}^{p-1}s_{i}(u_1,u_2)+v_1^{[p]}+v_2^{[p]}+\sum\limits_{i=1}^{p-1}s_{i}(v_1,v_2)\\
&&=(u_1^{[p]}+v_1^{[p]})+(u_2^{[p]}+v_2^{[p]})+(\sum\limits_{i=1}^{p-1}s_{i}(u_1,u_2)+\sum\limits_{i=1}^{p-1}s_{i}(v_1,v_2))\\
&&=(u_1+v_1)^{[p]}+(u_2+v_2)^{[p]}+\sum\limits_{i=1}^{p-1}(s_{i}(u_1,u_2))+s_{i}(v_1,v_2))\\
&&=(u_1+v_1)^{[p]}+(u_2+v_2)^{[p]}+\sum\limits_{i=1}^{p-1}s_{i}((u_1,v_1)+(u_2,v_2))
\end{eqnarray*}
and
\begin{eqnarray*}
&&(k(u_1+v_1))^{[p]}=(ku_1+kv_1 )^{[p]}=(ku_1)^{[p]_{1}}+(kv_1)^{[p]_{2}}\\
&&=k^{p}u_1^{[p]_{1}}+k^{p}v_1^{[p]_{2}}=k^{p}(u_1^{[p]_{1}}+v_1^{[p]_{2}})\\
&&=k^{p}(u_1+v_1)^{[p]}.
\end{eqnarray*}
Therefore, $(L\oplus\Gamma, [\cdot,\cdot]_{L\oplus\Gamma},\alpha+\beta,[p])$ is a restricted hom-Lie algebra.
\end{proof}

\begin{proposition}\label{proposition2.1}
A linear map $\phi:(L,[\cdot,\cdot]_L,\alpha,[p]_{1})\rightarrow(\Gamma,[\cdot,\cdot]_\Gamma,\beta,[p]_{2})$ is a restricted morphism of restricted hom-Lie algebras if and only if the graph $\mathfrak{G}_\phi\subseteq L\oplus\Gamma$ is a restricted hom-Lie subalgebra of  $(L\oplus\Gamma,[\cdot,\cdot]_{L\oplus\Gamma},\alpha+\beta,[p])$.
\end{proposition}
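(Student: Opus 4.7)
The plan is to verify both directions by matching the three defining conditions of a restricted hom-Lie subalgebra of $(L\oplus\Gamma,[\cdot,\cdot]_{L\oplus\Gamma},\alpha+\beta,[p])$ — closure under the bracket, stability under the twisting map $\alpha+\beta$, and closure of the fixed part under $[p]$ — against the three defining conditions of a restricted morphism, namely (\ref{1}), (\ref{2}), and $\phi(x^{[p]_{1}})=\phi(x)^{[p]_{2}}$. The only nontrivial observation behind the translation is the identification
\[
(\mathfrak{G}_\phi)_{1}=\{(x,\phi(x))\mid x\in L_{1}\ \text{and}\ \phi(x)\in\Gamma_{1}\},
\]
which comes from the definition of the twisting on $L\oplus\Gamma$ as $\alpha+\beta$.

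For the forward direction, assume $\phi$ is a restricted morphism. A typical element of $\mathfrak{G}_\phi$ has the form $(x,\phi(x))$. Using (\ref{1}), the bracket $[(x,\phi(x)),(y,\phi(y))]_{L\oplus\Gamma}=([x,y]_L,[\phi(x),\phi(y)]_\Gamma)$ equals $([x,y]_L,\phi([x,y]_L))\in\mathfrak{G}_\phi$. Using (\ref{2}), $(\alpha+\beta)(x,\phi(x))=(\alpha(x),\phi(\alpha(x)))\in\mathfrak{G}_\phi$, so $\mathfrak{G}_\phi$ is $(\alpha+\beta)$-invariant. Finally, for $(x,\phi(x))\in(\mathfrak{G}_\phi)_1$ — which forces $x\in L_1$ — the restrictedness of $\phi$ gives $(x,\phi(x))^{[p]}=(x^{[p]_1},\phi(x)^{[p]_2})=(x^{[p]_1},\phi(x^{[p]_1}))\in\mathfrak{G}_\phi$. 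Hence $\mathfrak{G}_\phi$ is a restricted hom-Lie subalgebra.

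For the converse, suppose $\mathfrak{G}_\phi$ is a restricted hom-Lie subalgebra. Stability under $\alpha+\beta$ says $(\alpha(x),\beta(\phi(x)))\in\mathfrak{G}_\phi$ for all $x\in L$, and since $\mathfrak{G}_\phi$ is the graph of $\phi$ we conclude $\beta(\phi(x))=\phi(\alpha(x))$, establishing (\ref{2}). Closure under the bracket gives $([x,y]_L,[\phi(x),\phi(y)]_\Gamma)\in\mathfrak{G}_\phi$, hence $[\phi(x),\phi(y)]_\Gamma=\phi([x,y]_L)$, establishing (\ref{1}). For the $p$-mapping condition, take any $x\in L_1$; by (\ref{2}) we already have $\beta(\phi(x))=\phi(\alpha(x))=\phi(x)$, so $(x,\phi(x))\in(\mathfrak{G}_\phi)_1$. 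Closure of $(\mathfrak{G}_\phi)_1$ under $[p]$ then yields $(x^{[p]_1},\phi(x)^{[p]_2})\in\mathfrak{G}_\phi$, which forces $\phi(x)^{[p]_2}=\phi(x^{[p]_1})$, as required.

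There is no real obstacle — every step is an immediate rewriting via the definitions. The one place to stay alert is the $p$-mapping step in the converse, where one must first extract (\ref{2}) in order to know that $\phi$ sends $L_1$ into $\Gamma_1$; without that, the element $(x,\phi(x))$ would not a priori be recognized as lying in $(\mathfrak{G}_\phi)_1$, and the hypothesis on closure under $[p]$ could not be applied.
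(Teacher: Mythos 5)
Your proof is correct and follows essentially the same route as the paper: both directions translate the three conditions (bracket closure, $(\alpha+\beta)$-invariance, $[p]$-closure of the fixed part) directly into (\ref{1}), (\ref{2}), and restrictedness of $\phi$. Your explicit remark that in the converse one must first extract (\ref{2}) to see that $(x,\phi(x))$ lies in the fixed part $(\mathfrak{G}_\phi)_1$ before invoking $[p]$-closure is a small point of care the paper passes over silently, but it does not change the argument.
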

\begin{proof}
Let
$\phi:(L,[\cdot,\cdot]_L,\alpha)\rightarrow(\Gamma,[\cdot,\cdot]_\Gamma,\beta)$
be a restricted morphism of restricted hom-Lie algebras. By (\ref{1}), we have
$$[u+\phi(u),v+\phi(v)]_{L\oplus\Gamma}=[u,v]_{L}+[\phi(u),\phi(v)]_{\Gamma}=[u,v]_{L}+\phi[u,v]_L.$$
Then the graph $\mathfrak{G}_\phi$ is closed under the bracket operation $[\cdot,\cdot]_{L\oplus\Gamma}$. Furthermore, by (\ref{2}), we have
$$(\alpha+\beta)(u+\phi(u))=\alpha(u)+\beta\circ\phi(u)=\alpha(u)+\phi\circ\alpha(u),$$
which implies that $(\alpha+\beta)(\mathfrak{G}_\phi)\subseteq \mathfrak{G}_\phi$. Thus, $\mathfrak{G}_\phi$ is a hom-Lie subalgebra of $(L\oplus\Gamma,[\cdot,\cdot]_{L\oplus\Gamma},\alpha+\beta)$.
Moreover, for $u+\phi(u)\in \mathfrak{G}_\phi,$ one gets $$(u+\phi(u))^{[p]}=u^{[p]_{1}}+(\phi(u))^{[p]_{2}}=u^{[p]_{1}}+\phi(u^{[p]_{1}})\in \mathfrak{G}_\phi.$$
Thereby, the graph $\mathfrak{G}_\phi\subseteq L\oplus\Gamma$ is a restricted hom-Lie subalgebra of  $(L\oplus\Gamma,[\cdot,\cdot]_{L\oplus\Gamma},\alpha+\beta,[p])$.

Conversely, if the graph $\mathfrak{G}_\phi\subseteq L\oplus\Gamma$ is a restricted hom-Lie subalgebra of
$(L\oplus\Gamma,[\cdot,\cdot]_{L\oplus\Gamma},\alpha+\beta,[p])$, then we have
$$[u+\phi(u),v+\phi(v)]_{L\oplus\Gamma}=[u,v]_{L}+[\phi(u),\phi(v)]_{\Gamma}\in\mathfrak{G}_\phi,$$
which implies that $$[\phi(u),\phi(v)]_{\Gamma}=\phi[u,v]_{L}.$$
Furthermore, $(\alpha+\beta)(\mathfrak{G}_{\phi})\subset \mathfrak{G}_{\phi}$ yields that
$$(\alpha+\beta)(u+\phi(u))=\alpha(u)+\beta\circ\phi(u)\in\mathfrak{G}_\phi,$$
which is equivalent to the condition $\beta\circ\phi(u)=\phi\circ\alpha(u)$, i.e. $\beta\circ\phi=\phi\circ\alpha.$ Therefore, $\phi$ is a morphism of restricted hom-Lie algebras.
Since $\mathfrak{G}_\phi$ is a restricted hom-Lie subalgebra of  $(L\oplus\Gamma,[\cdot,\cdot]_{L\oplus\Gamma},\alpha+\beta,[p]),$  we have
$$(u+\phi(u))^{[p]}=u^{[p]_{1}}+(\phi(u))^{[p]_{2}}\in \mathfrak{G}_\phi.$$ Thus, $(\phi(u))^{[p]_{2}}=\phi(u^{[p]_{1}})$ for $u\in L,$ i.e., $\phi$ is a restricted morphism.
\end{proof}

One advantage in considering restrictable hom-Lie algebras instead of restricted ones rests on the following theorem.
\begin{theorem}  \label{t1.3.6} Let $f:(L, [,]_{L},\alpha, [p]_{1})\rightarrow (L^{'}, [,]_{L^{'}},\beta, [p]_{2})$ be a surjective restricted morphism of hom-Lie algebras. If $L$ is restrictable, so is $L^{'}.$
\end{theorem}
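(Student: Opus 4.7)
The plan is to verify the defining condition of Definition \ref{d1.3.400} for $L'$: namely, for every $y \in L'_1$, exhibit $y^{*} \in L'_1$ with $(\mathrm{ad}\,y)^{p} = \mathrm{ad}\,y^{*}$. The natural strategy is to lift $y$ along $f$ to an element of $L_1$, invoke restrictability of $L$ to produce a witness there, and push that witness back down via $f$. Concretely, the proof will have three stages: (i) show $f$ restricts to a surjection $L_1 \twoheadrightarrow L'_1$; (ii) check that $f$ intertwines the adjoint operators; (iii) combine with restrictability of $L$ and cancel $f$ on the right.

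The first step is the crucial one and the only place where something is really being used. The inclusion $f(L_1) \subseteq L'_1$ is immediate from $\beta \circ f = f \circ \alpha$: if $\alpha(x) = x$, then $\beta(f(x)) = f(\alpha(x)) = f(x)$. For the reverse inclusion, given $y \in L'_1$, surjectivity yields some $x_0 \in L$ with $f(x_0) = y$, and since $f(\alpha(x_0)) = \beta(y) = y$, we have $(\alpha - \mathrm{id})(x_0) \in \ker f$. Using finite-dimensionality of $L$, I would adjust $x_0$ by an element of $\ker f$, for instance via the Fitting decomposition of the operator $(\alpha-\mathrm{id})$ (which is preserved by $f$), or by directly solving $(\alpha-\mathrm{id})(k) = -(\alpha-\mathrm{id})(x_0)$ with $k \in \ker f$, to replace $x_0$ by an honest $\alpha$-fixed preimage $x \in L_1$ of $y$. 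This lifting is the main obstacle; everything else is formal.

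Once such $x \in L_1$ with $f(x) = y$ is in hand, the second stage is the intertwining computation
\[
(\mathrm{ad}\,y)(f(w)) = [\beta(f(w)), y]_{L'} = [f(\alpha(w)), f(x)]_{L'} = f([\alpha(w), x]_{L}) = f((\mathrm{ad}\,x)(w)),
\]
valid for every $w \in L$, whence $\mathrm{ad}\,y \circ f = f \circ \mathrm{ad}\,x$ and, by iteration, $(\mathrm{ad}\,y)^{p} \circ f = f \circ (\mathrm{ad}\,x)^{p}$. For the third stage, restrictability of $L$ (Definition \ref{d1.3.400}, combined with Theorem \ref{t1.3.400}) produces $z \in L_1$ with $(\mathrm{ad}\,x)^{p} = \mathrm{ad}\,z$, and the same intertwining gives $f \circ \mathrm{ad}\,z = \mathrm{ad}\,f(z) \circ f$. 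Substituting yields $(\mathrm{ad}\,y)^{p} \circ f = \mathrm{ad}\,f(z) \circ f$, and surjectivity of $f$ permits cancelling $f$ on the right, giving $(\mathrm{ad}\,y)^{p} = \mathrm{ad}\,f(z)$. Since $z \in L_1$, stage (i) forces $f(z) \in L'_1$, so $(\mathrm{ad}\,y)^{p} \in \mathrm{ad}\,L'_1$, which is exactly the restrictability of $L'$.
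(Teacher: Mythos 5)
Your stages (ii) and (iii) are correct and purely formal, but stage (i) --- the claim that $f$ restricts to a surjection $L_1\twoheadrightarrow L_1'$ --- is a genuine gap, and neither of your proposed repairs closes it. The needed statement is false in general. Take $L=\mathbb{F}e_1\oplus\mathbb{F}e_2$ with zero bracket, $\alpha(e_1)=e_1+e_2$, $\alpha(e_2)=e_2$, and the zero $p$-mapping on $L_1=\mathbb{F}e_2$; take $L'=\mathbb{F}e$ with zero bracket, $\beta=\mathrm{id}$ and the zero $p$-mapping; and let $f(e_1)=e$, $f(e_2)=0$. This $f$ is a surjective restricted morphism, yet $f(L_1)=0$ while $L_1'=L'$, so $e\in L_1'$ has no $\alpha$-fixed preimage. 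Your second repair fails here because $(\alpha-\mathrm{id})(x_0)=e_2$ lies in $\ker f=\mathbb{F}e_2$ but not in $(\alpha-\mathrm{id})(\ker f)=0$; the Fitting decomposition of $\alpha-\mathrm{id}$ does not help either, since on this $L$ the operator is nilpotent and the decomposition is trivial. In general $\ker f\cap\mathrm{im}(\alpha-\mathrm{id})$ need not be contained in $(\alpha-\mathrm{id})(\ker f)$, which is exactly what your adjustment would require.

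The paper's proof sidesteps the lifting problem entirely: it only treats elements of $L_1'$ of the form $f(x)$ with $x\in L_1$, and it uses the hypothesis that $f$ is a \emph{restricted} morphism, computing $(\mathrm{ad}f(x))^p(f(y))=f((\mathrm{ad}x)^p(y))=f(\mathrm{ad}(x^{[p]_1})(y))=\mathrm{ad}((f(x))^{[p]_2})(f(y))$ --- a hypothesis your argument never invokes. Under the stated hypotheses the elements of $L_1'$ outside $f(L_1)$ are in any case covered by the given $p$-mapping $[p]_2$ via Theorem~\ref{t1.3.400}, so no lift is needed there; if you intend to prove the stronger statement in which $L'$ carries no prescribed $p$-structure, you must either establish the lifting under additional hypotheses or find an argument that handles $y\in L_1'\setminus f(L_1)$ directly.
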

\begin{proof} It follows from $f$ is a surjective mapping that $L^{'}=f(L).$  Then for $x\in L_{1},$ we have $\beta(f(x))=f(\alpha(x))=f(x)$ and $f(x)\in L^{'}_{1},$ where $L_{1}=\{x\in L|\alpha(x)=x\}$ and $L^{'}_{1}=\{x\in L^{'}|\beta(x)=x\}.$ For $y\in L,$ one gets \begin{eqnarray*}&&
(\mathrm{ad}f(x))^{p}(f(y))=(\mathrm{ad}f(x))^{p-1}[\beta(f(y)),f(x)]\\
&&=(\mathrm{ad}f(x))^{p-2}[[\beta^{2}(f(y)),\beta(f(x))],f(x)]\\
&&=[[[\beta^{p}f(y),\underbrace{f(x)],f(x)],\cdots,f(x)]}_{p}\\
&&=\beta^{p}[[[f(y),\underbrace{f(x)],f(x)],\cdots,f(x)]}_{p}\\
&&=\beta^{p}\circ f[[[y,\underbrace{x],x],\cdots,x]}_{p} =f[[[\alpha^{p}(y),\underbrace{x],x],\cdots,x]}_{p}\\
&&=f((\mathrm{ad}x)^{p}(y))=f((\mathrm{ad}x^{[p]_{1}})(y))=f[\alpha(y), x^{[p]_{1}}]\\
&&=f[\alpha(y), \alpha(x^{[p]_{1}})]=f\circ \alpha[y, x^{[p]_{1}}]=\beta \circ f[y,x^{[p]_{1}}]\\
&&=\beta[f(y),f(x^{[p]_{1}})]=[\beta(f(y)),\beta(f(x^{[p]_{1}}))]\\
&&=[\beta(f(y)),f(x^{[p]_{1}})]=\mathrm{ad}f(x^{[p]_{1}})(f(y))\\
&&=\mathrm{ad}(f(x))^{[p]_{2}}(f(y)).
\end{eqnarray*}
We have $(\mathrm{ad}f(x))^{p}=\mathrm{ad}(f(x))^{[p]_{2}}\in \mathrm{ad}{L_{1}}^{'}.$ Hence $L^{'}$ is restrictable.
\end{proof}

\begin{theorem} \label{t1.3.7} Let $A$ and $B$ be hom-Lie ideals of hom-Lie algebra $(L,[.,.]_{L},\alpha)$ such that $L=A\oplus B.$  Then $L$ is restrictable if and only if $A,B$ are restrictable.
\end{theorem}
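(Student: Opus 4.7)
The plan is to exploit the direct sum decomposition to reduce the $p$-th power of the adjoint on $L$ to the separate adjoint actions on $A$ and $B$. The main structural fact driving everything is that $[A,B]\subseteq A\cap B=0$, which follows immediately from the ideal property on both sides. From this I would first establish two easy preliminary observations: since $\alpha$ preserves each ideal, any $x=a+b\in L_{1}$ with $a\in A,b\in B$ forces $\alpha(a)=a$ and $\alpha(b)=b$ by uniqueness of the decomposition, so $L_{1}=A_{1}\oplus B_{1}$ where $A_{1}=A\cap L_{1}$ and $B_{1}=B\cap L_{1}$.

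Next I would compute the adjoint on the direct sum. For $a\in A_{1}$, $b\in B_{1}$, $u\in A$, $v\in B$, using $[A,B]=0$ and $\alpha(A)\subseteq A$, $\alpha(B)\subseteq B$, I get
\begin{equation*}
\mathrm{ad}(a+b)(u+v)=[\alpha(u)+\alpha(v),a+b]=[\alpha(u),a]+[\alpha(v),b]=\mathrm{ad}_{A}(a)(u)+\mathrm{ad}_{B}(b)(v).
\end{equation*}
Since the two summands stay in $A$ and $B$ respectively, an induction on $k$ yields
\begin{equation*}
(\mathrm{ad}(a+b))^{k}(u+v)=(\mathrm{ad}_{A}a)^{k}(u)+(\mathrm{ad}_{B}b)^{k}(v),\qquad k\ge 1.
\end{equation*}
This is the identity that powers both directions.

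For $(\Leftarrow)$, take $x\in L_{1}$ and write $x=a+b$ with $a\in A_{1}$, $b\in B_{1}$. By restrictability of $A$ and $B$ there exist $a'\in A_{1}$, $b'\in B_{1}$ with $(\mathrm{ad}_{A}a)^{p}=\mathrm{ad}_{A}a'$ and $(\mathrm{ad}_{B}b)^{p}=\mathrm{ad}_{B}b'$. Setting $z:=a'+b'\in L_{1}$, the displayed formula at $k=p$ matches $\mathrm{ad}(z)$ on every $u+v\in L$, so $(\mathrm{ad}\,x)^{p}=\mathrm{ad}\,z\in\mathrm{ad}\,L_{1}$. For $(\Rightarrow)$, take $a\in A_{1}$ (so $a\in L_{1}$). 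Restrictability of $L$ gives $z\in L_{1}$ with $(\mathrm{ad}\,a)^{p}=\mathrm{ad}\,z$; decompose $z=a'+b'$ with $a'\in A_{1}$, $b'\in B_{1}$. Evaluating both sides on $u\in A$ (i.e. $v=0$) in the displayed formula gives $(\mathrm{ad}_{A}a)^{p}(u)=[\alpha(u),a']=\mathrm{ad}_{A}a'(u)$, so $(\mathrm{ad}_{A}a)^{p}\in\mathrm{ad}_{A}A_{1}$ and $A$ is restrictable; the argument for $B$ is symmetric.

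The main (mild) obstacle is the bookkeeping for the decomposition of $L_{1}$ and the inductive passage from $\mathrm{ad}(a+b)$ to $(\mathrm{ad}(a+b))^{p}$: one must be careful that intermediate terms stay inside the respective ideals so that $A$-components and $B$-components do not mix under iterated bracketing. Once $[A,B]=0$ and $\alpha$-stability of $A$ and $B$ are in hand, this separation is automatic and the rest is a direct verification.
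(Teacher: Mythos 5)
Your proof is correct. The $(\Leftarrow)$ direction is essentially the paper's argument: both of you decompose $x=x_1+x_2$, use $\alpha$-invariance of the ideals to place the components in $A_1$ and $B_1$, and conclude $(\mathrm{ad}\,x)^p=\mathrm{ad}(y_1+y_2)$ — the paper phrases this as $(\mathrm{ad}\,x_1+\mathrm{ad}\,x_2)^p=(\mathrm{ad}\,x_1)^p+(\mathrm{ad}\,x_2)^p$ (the cross terms vanish because $[A,B]\subseteq A\cap B=0$), which is exactly your componentwise identity at $k=p$. Where you diverge is the $(\Rightarrow)$ direction: the paper disposes of it in one line by identifying $A\cong L/B$ and $B\cong L/A$ and invoking Theorem \ref{t1.3.6} (surjective restricted images of restrictable algebras are restrictable), whereas you stay inside $L$, take $z=a'+b'$ with $(\mathrm{ad}\,a)^p=\mathrm{ad}\,z$, and observe that the $B$-component $b'$ acts trivially on $A$, so $(\mathrm{ad}_A a)^p=\mathrm{ad}_A a'$. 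Your version is more self-contained and arguably cleaner: Theorem \ref{t1.3.6} as stated is about restricted morphisms between algebras already equipped with $p$-mappings, so the paper's appeal to it implicitly requires first endowing $L$ and its quotients with $p$-structures via Theorem \ref{t1.3.400} and checking the projections are restricted morphisms; your direct projection argument needs none of that machinery. The paper's route, in exchange, emphasizes the functorial fact that restrictability passes to arbitrary surjective images, not just direct summands.
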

\begin{proof}
$(\Leftarrow)$ If $A,B$ are restrictable, for $x\in L_{1}$ with $\alpha(x)=x,$ we may suppose that $x=x_{1}+x_{2},$ where $x_{1}\in A, x_{2}\in B.$ Then $\alpha(x_{1}+x_{2})=\alpha(x_{1})+\alpha(x_{2})=x_{1}+x_{2}.$ Since $A$ and $B$ are hom-Lie ideals, one gets $\alpha(x_{1})\in A, \alpha(x_{2})\in B.$ we obtain $\alpha(x_{1})=x_{1}$ and $\alpha(x_{2})=x_{2}.$  As $A,B$ are restrictable, then there exists $y_{1}\in A_{1}, y_{2}\in B_{1}$ with $\alpha(y_{1})=y_{1}$ and $\alpha(y_{2})=y_{2},$
such that
$(\mathrm{ad}x_{1})^{p}=\mathrm{ad}y_{1}$ and $(\mathrm{ad}x_{2})^{p}=\mathrm{ad}y_{2}.$
 Thus, \begin{eqnarray*}
&&(\mathrm{ad}(x_{1}+x_{2}))^{p}=(\mathrm{ad}x_{1}+\mathrm{ad}x_{2})^{p}\\
&&=(\mathrm{ad}x_{1})^{p}+(\mathrm{ad}x_{2})^{p}=\mathrm{ad}y_{1}+\mathrm{ad}y_{2}\\
&&=\mathrm{ad}(y_{1}+y_{2}).
\end{eqnarray*}
Therefore, $L$ is restrictable.

 $(\Rightarrow)$
 If $L$ is restrictable , so are $A\cong L/B,$ $B\cong L/A$ by Theorem \ref{t1.3.6}.
 \end{proof}

 \begin{corollary} \label{c1.3.9} Let $A,B$ be restrictable hom-Lie ideals of a restricted hom-Lie algebra $(L,[.,.]_{L},\alpha,[p])$ such that $L=A+B$
 and $[A,B]=0.$ Then $L$ is restrictable.
\end{corollary}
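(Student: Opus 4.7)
The plan is to realise $L$ as a surjective hom-Lie image of the external direct sum $\hat L := A \oplus B$ and then apply Theorem \ref{t1.3.7} followed by Theorem \ref{t1.3.6}.

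I would first assemble $\hat L = A \oplus B$ as a hom-Lie algebra with twist $\alpha_{\hat L}(a,b):=(\alpha(a),\alpha(b))$ and componentwise bracket, along the lines of the direct-sum construction for restricted hom-Lie algebras given earlier in this section. Since $A$ and $B$ are restrictable by hypothesis, Theorem \ref{t1.3.7} gives that $\hat L$ is restrictable; then by Theorem \ref{t1.3.400} one may choose $p$-mappings on $A_1$ and $B_1$ and combine them componentwise, so that $\hat L$ becomes restricted.

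Next, I would introduce the addition map $\phi:\hat L\to L$, $(a,b)\mapsto a+b$. Surjectivity is the hypothesis $L=A+B$; the equivariance $\alpha\circ\phi=\phi\circ\alpha_{\hat L}$ follows from the fact that each of the hom-Lie ideals $A,B$ is $\alpha$-stable. The main bracket verification is that expanding
$[a_1+b_1,\,a_2+b_2]_L = [a_1,a_2]_L+[a_1,b_2]_L+[b_1,a_2]_L+[b_1,b_2]_L$
and invoking the hypothesis $[A,B]=0$ kills the cross terms, leaving $[a_1,a_2]_L+[b_1,b_2]_L=\phi([(a_1,b_1),(a_2,b_2)]_{\hat L})$. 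With $\phi$ a surjective morphism of hom-Lie algebras and $\hat L$ restrictable, Theorem \ref{t1.3.6} then transports restrictability to $L=\phi(\hat L)$, completing the proof.

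The one technical point to monitor is that Theorem \ref{t1.3.6} is stated for \emph{restricted} morphisms, while $\phi$ need not a priori respect the chosen $p$-mappings. Inspection of its proof shows that the only identity actually used is $(\mathrm{ad}_L\,\phi(\hat x))^p = \mathrm{ad}_L\,\phi(\hat x^{[p]_{\hat L}})$ for $\hat x\in\hat L_1$, which follows purely from $\phi$ being a hom-Lie morphism out of the restricted $\hat L$, and it yields $(\mathrm{ad}_L\,\phi(\hat x))^p\in\mathrm{ad}_L\,L_1$ because $\phi(\hat x^{[p]_{\hat L}})\in L_1$. The remaining ingredient is that every $x\in L_1$ arises as $\phi(\hat x)$ for some $\hat x\in\hat L_1$, which is arranged by writing $x=a+b$ with $a\in A$, $b\in B$ and absorbing the discrepancy $\alpha(a)-a=-(\alpha(b)-b)\in A\cap B\subset C(L)$ into one of the summands; this last step is the only place where one really uses the centrality $[A,B]=0$ beyond its role in making $\phi$ a morphism.
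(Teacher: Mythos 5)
Your proposal follows the paper's own route exactly: form the external direct sum $A\oplus B$, invoke Theorem \ref{t1.3.7} to make it restrictable (hence restricted via Theorem \ref{t1.3.400}), map it onto $L$ by the addition map, use $[A,B]=0$ to kill the cross terms in the bracket, and finish with Theorem \ref{t1.3.6}. Your observation that Theorem \ref{t1.3.6} does not really need the morphism to respect the $p$-mappings --- only the identity $(\mathrm{ad}\,\phi(\hat x))^{p}=\mathrm{ad}\,\phi(\hat x^{[p]})$, which already follows from $\phi$ being a surjective hom-morphism out of a restricted source --- is correct, and is in fact more careful than the paper, which asserts without justification that the addition map is a restricted morphism (that would require choosing the $p$-mappings on $A$, $B$ and $L$ compatibly, which is not automatic).

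The genuine gap is in your last step. You need every $x\in L_{1}$ to be $\phi(\hat x)$ for some $\hat x\in(A\oplus B)_{1}=A_{1}\oplus B_{1}$, i.e.\ $L_{1}=A_{1}+B_{1}$, and you propose to get this by writing $x=a+b$ and absorbing the discrepancy $c:=\alpha(a)-a=-(\alpha(b)-b)\in A\cap B$ into one of the summands. But replacing $(a,b)$ by $(a+d,\,b-d)$ with $d\in A\cap B$ changes the discrepancy to $c+\alpha(d)-d$, so the absorption requires $-c\in(\alpha-\mathrm{id})(A\cap B)$; the centrality of $A\cap B$ gives you nothing toward this. It can fail: take $L$ abelian with basis $z,a,b$ and $\alpha(z)=z$, $\alpha(a)=a+z$, $\alpha(b)=b-z$, with $A=\langle z,a\rangle$ and $B=\langle z,b\rangle$. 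All hypotheses of the corollary hold (everything is an ideal and trivially restrictable), yet $a+b\in L_{1}$ while $A_{1}+B_{1}=\langle z\rangle$, and since $\alpha-\mathrm{id}$ vanishes on $A\cap B=\langle z\rangle$ no choice of $d$ repairs the decomposition. (The conclusion of the corollary still holds there, but your mechanism for proving it breaks.) To be fair, the paper's own proof silently has the same defect: Theorem \ref{t1.3.6} only delivers $(\mathrm{ad}\,f(x))^{p}\in\mathrm{ad}\,L'_{1}$ for those elements of $L'_{1}$ that are images of fixed points of the source, and neither the paper nor your argument shows this covers all of $L_{1}$ when the sum $A+B$ is not direct. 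So you flagged exactly the right step to worry about, but the repair you sketch does not go through as stated.
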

\begin{proof}
 Define a mapping $f:A\oplus B\rightarrow L, (x,y)\mapsto x+y.$
Clearly, $f$ is a surjection.
For $(x_{1},y_{1}), (x_{2}, y_{2})\in A\oplus B,$
by $[A,B]=0,$ one gets $[x_{1},y_{2}]=[y_{1},x_{2}]=0.$ We have
\begin{eqnarray*}&&
f[(x_{1},y_{1}), (x_{2},y_{2})]=f([x_{1},x_{2}], [y_{1}, y_{2}])\\
&&=[x_{1},x_{2}]+[y_{1},y_{2}]=[x_{1},x_{2}]+[x_{1},y_{2}]+[y_{1},x_{2}]+[y_{1},y_{2}]\\
&&=[x_{1}+y_{1},x_{2}+y_{2}]=[f(x_{1},y_{1}),f(x_{2},y_{2})].
\end{eqnarray*}
Moreover, one gets
\begin{eqnarray*}&&
\alpha\circ f(x,y)=\alpha(x+y)\\
&&=\alpha(x)+ \alpha(y)=f((\alpha(x), \alpha(y)))\\
&&=f\circ \alpha(x,y).
\end{eqnarray*}
Therefore, $\alpha\circ f=f\circ \alpha.$
For $x\in A, y\in B,$ $\alpha(x,y)=(x,y),$ we have
\begin{eqnarray*}&&
f((x,y)^{[p]})=f((x^{[p]_{1}},y^{[p]_{2}}))\\
&&=x^{[p]_{1}}+y^{[p]_{2}}=(x+y)^{[p]}\\
&&=(f(x,y))^{[p]}.
\end{eqnarray*}
Thus, $f$ is a restricted morphism. By Theorem \ref{t1.3.7}, we have $A\oplus B$ is restrictable.  By Theorem \ref{t1.3.6}, one gets $L$ is restrictable.
\end{proof}

\begin{definition} Let $(L,[.,.]_{L},\alpha)$ be a hom-Lie algebra and $\psi$ be a symmetric bilinear form on $L.$
$\psi$ is called associative, if $\psi(x,[z,y])=\psi([\alpha(z),x],y).$
\end{definition}

\begin{definition} Let $(L,[.,.]_{L},\alpha)$ be a hom-Lie algebra and $\psi$ a symmetric bilinear form on $L.$
Set $L^{\bot}=\{x\in L| \psi(x,y)=0, \forall \ y\in L\}.$ $L$ is called nondegenerate, if $L^{\bot}=0.$
\end{definition}

\begin{theorem} \label{l1.3.16} Let $L$ be a subalgebra of the restricted hom-Lie algebra $(G,[.,.]_{G},\alpha,[p])$ with $C(L)=\{0\}.$  Assume
 $\lambda:G\times G\rightarrow \mathbb{F}$ to be an associative symmetric bilinear form, which is nondegenerate on $L\times L.$
 Then $L$ is restrictable.
\end{theorem}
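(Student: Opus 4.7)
I adapt the classical argument that a subalgebra of a restricted Lie algebra carrying a nondegenerate associative form is restrictable, replacing ordinary brackets by the $\alpha$-twisted ones and keeping track of the $L_{1}$ membership.

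Fix $x\in L_{1}$. Since $(G,[p])$ is restricted, $x^{[p]}$ lies in $G$, and because $\alpha\circ[p]=[p]\circ\alpha$ and $\alpha(x)=x$, in fact $x^{[p]}\in G_{1}$ with $(\mathrm{ad}\,x)^{p}=\mathrm{ad}\,x^{[p]}$ on $G$; as $L$ is a hom-Lie subalgebra, $(\mathrm{ad}\,x)^{p}$ also sends $L$ into $L$. My goal is to produce $y\in L_{1}$ with $\mathrm{ad}\,y=(\mathrm{ad}\,x)^{p}$ on $L$, and I note that uniqueness of such a $y$ is automatic from $C(L)=\{0\}$, since two candidates would differ by an element of the centre of $L$.

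I construct $y$ by pulling $x^{[p]}$ back into $L$ via $\lambda$. The functional $\phi_{x}\colon L\to\mathbb{F}$, $c\mapsto \lambda(x^{[p]},c)$, is realised (by nondegeneracy of $\lambda$ on $L\times L$ and $\dim L<\infty$) as $\lambda(y,\cdot)|_{L}$ for a unique $y\in L$. Then associativity of $\lambda$ gives, for all $a,b\in L$,
\[
\lambda([\alpha(a),y],b)=\lambda(y,[a,b])=\lambda(x^{[p]},[a,b])=\lambda([\alpha(a),x^{[p]}],b)=\lambda((\mathrm{ad}\,x)^{p}(a),b),
\]
and since both $[\alpha(a),y]$ and $(\mathrm{ad}\,x)^{p}(a)$ lie in $L$, nondegeneracy of $\lambda$ on $L\times L$ forces $\mathrm{ad}\,y(a)=(\mathrm{ad}\,x)^{p}(a)$ for every $a\in L$.

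The main obstacle is the last step, checking $y\in L_{1}$. The plan is to apply $\alpha$ to the identity $[\alpha(a),y]=[\alpha(a),x^{[p]}]$; multiplicativity of $\alpha$ together with $\alpha(x^{[p]})=x^{[p]}$ yields $[\alpha^{2}(a),\alpha(y)]=[\alpha^{2}(a),x^{[p]}]$, while substituting $\alpha(a)\in L$ into the original identity gives $[\alpha^{2}(a),y]=[\alpha^{2}(a),x^{[p]}]$. Subtracting, $\alpha(y)-y\in L$ centralises $\alpha^{2}(L)$, and (using that $\alpha|_{L}$ is surjective, which in the finite-dimensional setting is the natural hypothesis to absorb the extra twist) we conclude $\alpha(y)-y\in C(L)=\{0\}$, i.e.\ $y\in L_{1}$. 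Combining this with $\mathrm{ad}\,y=(\mathrm{ad}\,x)^{p}$ on $L$ shows $(\mathrm{ad}\,x)^{p}\in\mathrm{ad}\,L_{1}$, whence $L$ is restrictable. The delicate point with no classical counterpart is exactly this final descent from ``centralises $\alpha^{2}(L)$'' to ``lies in $C(L)$''.
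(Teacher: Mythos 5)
Your construction of $y$ and the verification that $\mathrm{ad}\,y=(\mathrm{ad}\,x)^{p}$ on $L$ coincide with the paper's argument: choose $y\in L$ with $\lambda(y,\cdot)|_{L}=\lambda(x^{[p]},\cdot)|_{L}$, then use associativity together with nondegeneracy on $L\times L$ (noting that both $[\alpha(a),y]$ and $[\alpha(a),x^{[p]}]=(\mathrm{ad}\,x)^{p}(a)$ lie in $L$) to force $[\alpha(a),y]=[\alpha(a),x^{[p]}]$ for all $a\in L$. The divergence, and the genuine gap, is in the step $y\in L_{1}$. Your route --- apply $\alpha$ to the identity, subtract, and conclude that $\alpha(y)-y$ centralises $\alpha^{2}(L)$ --- only lands in $C(L)$ if $\alpha|_{L}$ is surjective, and you flag this yourself; but surjectivity of $\alpha$ is nowhere among the hypotheses of the theorem, and it is not a consequence of finite-dimensionality, since $\alpha$ is merely required to be an algebra morphism and may fail to be injective. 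As written, your argument proves the theorem only under an additional assumption that the statement does not grant you.

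The paper closes this step by a different test element. From $\lambda(x^{[p]}-y,z)=0$ for all $z\in L$ it deduces $\lambda([\alpha(z),x^{[p]}-y],w)=0$ for all $z,w\in L$; since $[\alpha(z),x^{[p]}-y]\in L$, nondegeneracy gives $[\alpha(z),x^{[p]}-y]=0$, and triviality of the centre then yields $y=x^{[p]}$. The membership $y\in L_{1}$ is now automatic, because $\alpha(x^{[p]})=(\alpha(x))^{[p]}=x^{[p]}$ from $\alpha\circ[p]=[p]\circ\alpha$ and $\alpha(x)=x$. That is the move you are missing: instead of testing $\alpha(y)-y$ against the centre (which forces you to undo the $\alpha$-twist by surjectivity), test $x^{[p]}-y$ itself, which inherits $\alpha$-invariance for free once it is identified with $0$. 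I should add that the paper's own step is slightly loose --- $x^{[p]}-y$ a priori lives in $G$ rather than $L$, so the hypothesis actually being invoked is $C_{G}(L)=\{0\}$ rather than $C(L)=\{0\}$ --- but that defect of the source does not rescue the surjectivity assumption in your version; it only means the clean fix is to argue with $x^{[p]}-y$ and the centraliser in $G$.
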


\begin{proof} Since $\lambda$ is nondegenerate on $L\times L,$ every linear form $f$ on $L$ is determined
by a suitably chosen element $y\in L:f(z)=\lambda(y,z), \forall z\in L.$ Let $x\in L_{1}.$
Then there exists $y\in L$ such that
$$\lambda(x^{[p]},z)=\lambda(y,z),\forall z\in L.$$
This implies that $0=\lambda (x^{[p]}-y,[L,L])=\lambda ([\alpha(L), x^{[p]}-y],L)$ and $[\alpha(L), x^{[p]}-y]=0.$ Therefore, $x^{[p]}-y\in C(L)=\{0\}$ and $y=x^{[p]}\in L_{1}.$
Moreover, we obtain
$$(\mathrm{ad}x|_{L})^{p}=\mathrm{ad}x^{[p]}|_{L}=\mathrm{ad}y|_{L},$$
which proves that $L$ is  restrictable.
\end{proof}

\begin{proposition} \label{p1.3.14} Let $(L,[.,.]_{L},\alpha)$ be a restrictable hom-Lie algebra and $H$ a subalgebra of $L.$ Then $H$ is a
$p$-subalgebra for some mapping $[p]$ on $L$ if and only if $(\mathrm{ad}H_{1}|_{L})^{p}\subseteq \mathrm{ad}H_{1}|_{L}.$
\end{proposition}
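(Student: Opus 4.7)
\bigskip

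\noindent\textit{Proof proposal.}

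The forward direction is essentially immediate from the definitions. If $H$ is a $p$-subalgebra for some $p$-mapping $[p]$ on $L$, then for every $x\in H_{1}$ we have $x^{[p]}\in H_{1}$, and condition (1) of Definition~2.3 gives $(\mathrm{ad}\,x)^{p}=\mathrm{ad}\,x^{[p]}$; restricting both sides to $L$ yields $(\mathrm{ad}\,x|_{L})^{p}=\mathrm{ad}\,x^{[p]}|_{L}\in \mathrm{ad}\,H_{1}|_{L}$. So I would dispatch this direction in one line.

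For the converse, the plan is to build the desired $p$-mapping on $L$ with the help of Theorem~\ref{t1.3.4}. First I would pick a basis $(e_{j})_{j\in J_{0}}$ of $H_{1}$ and extend it to a basis $(e_{j})_{j\in J}$ of $L_{1}$, where $J_{0}\subseteq J$. For each $j\in J_{0}$, the hypothesis $(\mathrm{ad}\,H_{1}|_{L})^{p}\subseteq \mathrm{ad}\,H_{1}|_{L}$ supplies an element $y_{j}\in H_{1}$ with $(\mathrm{ad}\,e_{j})^{p}=\mathrm{ad}\,y_{j}$. For each remaining index $j\in J\setminus J_{0}$, the restrictability of $L$ provides $y_{j}\in L_{1}$ with $(\mathrm{ad}\,e_{j})^{p}=\mathrm{ad}\,y_{j}$. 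Applying Theorem~\ref{t1.3.4} to the chosen basis and the chosen family $(y_{j})_{j\in J}$, I obtain a $p$-mapping $[p]:L_{1}\rightarrow L_{1}$ satisfying $e_{j}^{[p]}=y_{j}$ for all $j\in J$.

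It remains to verify that this $[p]$ makes $H$ a $p$-subalgebra, i.e.\ that $x^{[p]}\in H_{1}$ for every $x\in H_{1}$. I would argue by induction on the number of nonzero basis coordinates in the expansion $x=\sum_{j\in J_{0}}k_{j}e_{j}$. The base case is $(k\,e_{j})^{[p]}=k^{p}e_{j}^{[p]}=k^{p}y_{j}\in H_{1}$, using condition (2) of Definition~\ref{d1.1.18}. The inductive step uses condition (3):
\[
(u+v)^{[p]}=u^{[p]}+v^{[p]}+\sum_{i=1}^{p-1}s_{i}(u,v).
\]
By the induction hypothesis $u^{[p]},v^{[p]}\in H_{1}$, and since each $s_{i}(u,v)$ is an iterated bracket of $u,v\in H_{1}$, the fact that $H_{1}$ is itself a hom-Lie subalgebra of $L$ (noting that $[a,b]\in H$ by closedness of $H$ and $\alpha([a,b])=[\alpha(a),\alpha(b)]=[a,b]$ whenever $a,b\in H_{1}$) guarantees that every $s_{i}(u,v)$ lies in $H_{1}$.

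The point requiring the most care is the last one: verifying that $H_{1}$ is closed under the Jacobson polynomials $s_{i}$. This hinges on the multiplicativity of $\alpha$ together with the defining property of $H_{1}$, which together ensure $L_{1}$, and hence $H_{1}$, is closed under the bracket. Once that observation is in hand, the inductive argument is routine and the converse follows.
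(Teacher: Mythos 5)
Your proposal is correct, and in the converse direction it is actually more complete than the argument the paper gives. The paper disposes of $(\Leftarrow)$ in one line: the hypothesis $(\mathrm{ad}H_{1}|_{L})^{p}\subseteq \mathrm{ad}H_{1}|_{L}$ makes $H$ restrictable, so by Theorem \ref{t1.3.400} $H$ carries a $p$-mapping, "thereby $H$ is a $p$-subalgebra of $L$." But the statement asks for a $p$-mapping on $L$ under which $H_{1}$ is stable, and the paper's argument only produces a $p$-mapping on $H$ itself, leaving the extension to $L$ and the compatibility check implicit. Your route supplies exactly the missing construction: extend a basis of $H_{1}$ to one of $L_{1}$, choose the targets $y_{j}$ inside $H_{1}$ for the indices in $J_{0}$ (using the hypothesis) and inside $L_{1}$ for the rest (using restrictability of $L$), invoke Theorem \ref{t1.3.4}, and then verify $x^{[p]}\in H_{1}$ for all $x\in H_{1}$ by induction via axioms (2) and (3) of Definition \ref{d1.1.18}. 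Your closing observation --- that the Jacobson summands $s_{i}(u,v)$ stay in $H_{1}$ because $H$ is bracket-closed and $\alpha([a,b])=[\alpha(a),\alpha(b)]=[a,b]$ for $a,b\in H_{1}$ --- is precisely the point that makes the induction work, and it is nowhere addressed in the paper. In short, both proofs rest on the same two pillars (restrictability plus the basis existence theorem), but yours proves the proposition as literally stated, at the cost of a page of routine verification that the paper chose to suppress.
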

\begin{proof}
$(\Rightarrow)$ If $H$ is a
$p$-subalgebra, then for $x\in H_{1},$ $x^{[p]}\in H_{1},$ and
$(\mathrm{ad}x)^{p}=\mathrm{ad}x^{[p]}\subseteq \mathrm{ad}H_{1}|_{L}.$
Hence, $(\mathrm{ad}H_{1}|_{L})^{p}\subseteq \mathrm{ad}H_{1}|_{L}.$

$(\Leftarrow)$ If $(\mathrm{ad}H_{1}|_{L})^{p}\subseteq \mathrm{ad}H_{1}|_{L},$ then $H$ is restrictable. By Theorem  \ref{t1.3.400},
$H$ is restricted. Thereby, $H$ is a $p$-subalgebra of $L.$
\end{proof}

\section{Cohomology of restricted hom-Lie algebras}

In this section, we will discuss the cohomology of restricted hom-Lie algebras in the abelian case, which is similar to the reference \cite{etj}.
The following definition is analogous to that of the restricted universal enveloping algebra in the reference \cite{sf}.
\begin{definition} \label{d1.3.4002}
Let $(L, [.,.]_{L}, \alpha, [p])$ be a restricted hom-Lie algebra. The pair $(u(L),\alpha^{'}, i)$ consisting of a hom-associative
algebra $(u(L),\alpha^{'})$ with unity and a restricted hom-morphism $i: L\rightarrow u(L)^{-}$ is called a restricted hom-universal enveloping algebra of $L$ if given any hom-associative
algebra $(A, \beta)$ with unity and
any restricted hom-morphism $f : L\rightarrow A^{-},$ there exists a unique morphism $\bar{f}: u(L)\rightarrow A$ of hom-associative
algebras such that $\bar{f}\circ i=f.$
\end{definition}

\begin{definition} {\rm\cite{ma}}\, \label{d1.3.4003}
Let $A=(V, \mu, \alpha)$ be a hom-associative $\mathbb{F}$-algebra. An $A$-module is a triple $(M,f,\gamma)$ where $M$ is $\mathbb{F}$-vector space and
$f,\gamma$ are $\mathbb{F}$-linear maps, $f:M\longrightarrow M$ and $\gamma: V\otimes M\longrightarrow M,$ such that the following diagram commutes:
\begin{displaymath}
\begin{CD}
 V\otimes M @ > \gamma>>  M \\
@AA{\alpha \otimes \gamma}A  @AA {\gamma} A\\
V\otimes V\otimes M             @> \mu \otimes f>> V\otimes M
\end{CD}
\end{displaymath}
\end{definition}

We let $S^{\ast}(L)$ and $\Lambda^{\ast}(L)$ denote the symmetric and alternating algebras of restricted hom-Lie algebra $(L, [.,.]_{L}, \alpha, [p])$, respectively.
Bases for the homogeneous subspaces of degree $k$ for these spaces consist
of monomials $e^{\mu} = e_{1}^{\mu_{1}}\cdots e_{n}^{\mu_{n}}$ and $e_{\vec{i}}=e_{i_{1}}\wedge\cdots\wedge e_{i_{k}},$ respectively,
where

$\mu=(\mu_{1},\cdots,\mu_{n})\in \mathbb{Z}^{n}$ satisfies $\mu_{j}\geq 0, |\mu|=\sum_{j}\mu_{j}=k;$

$\vec{i}=(i_{1},\cdots, i_{k})\in \mathbb{Z}^{k}$ satisfies $1\leq i_{1}<\cdots<i_{k}\leq n.$\\
Let $\alpha: \lambda\mapsto \lambda^{p}$ denote the Frobenius automorphism of $\mathbb{F}.$ If $V$ is an abelian group
with an $\mathbb{F}$-vector space structure given by $\mathbb{F}\rightarrow \mathrm{End}(V ),$ then the composition
$$\mathbb{F}\xrightarrow{\alpha^{-1}}\mathbb{F}\rightarrow \mathrm{End}(V )$$
gives another vector space structure on $V$ which we will denote by $\overline{V}.$ Of course
$\overline{V}$ is isomorphic to $V$ as an $\mathbb{F}$-vector space (they have the same dimension). We
note that if $W$ is any other $\mathbb{F}$-vector space, then a $p$-semilinear map $V\rightarrow W$
is a linear map $\overline{V}\rightarrow W$ and vice versa.

In sequel, $(L, [.,.]_{L}, \alpha,[p])$ denotes a finite-dimensional  restricted
hom-Lie algebra over $\mathbb{F}$ such that $[g_{i}, g_{j}]=0$ for all $g_{i},g_{j}\in L$ and $(u(L), \alpha^{'}, i)$ denotes the
restricted hom-universal enveloping algebra of $L.$ Here we take $\alpha=\alpha^{'}$ and $\alpha$ satisfies $\alpha(u_{1}u_{2})=\alpha(u_{1})\alpha(u_{2})$ for $u_{1},u_{2}\in u(L).$ For $s,t\geq 0,$ we define
$$C_{s,t} = S^{t}\overline{L}_{1}\otimes \Lambda^{s}L\otimes u(L)$$
with the $u(L)$-module structure given by
$$u(h_{1}\cdots h_{t}\otimes g_{1}\wedge\cdots \wedge g_{s}\otimes x) =h_{1}\cdots h_{t}\otimes g_{1}\wedge \cdots \wedge g_{s}\otimes \alpha(u)x,$$
where $h_{i}, g_{j}\in L$ and $u,x\in u(L).$ If either $s<0$ or $t<0,$ we put $C_{s,t}=0$ and
define
$$C_{k} =
\bigoplus_{2t+s=k}C_{s,t}$$
for all $k\in \mathbb{Z}.$ Note that each $C_{k}$ is a free $u(L)$-module. If not both $t=0$ and
$s=0,$ we then define a map
$$d_{s,t}: C_{s,t}\rightarrow C_{t,s-1}\oplus C_{t-1,s+1}$$ by the formulas

\quad \quad \quad \quad \quad $d_{t,s}(h_{1}\cdots h_{t}\otimes g_{1}\wedge \cdots \wedge g_{s}\otimes x)$
\begin{eqnarray}
&&=\sum_{i=1}^{s}(-1)^{i-1}h_{1}\cdots h_{t}\otimes \alpha(g_{1})\wedge \cdots\widehat{\alpha(g_{i})}\cdots \wedge \alpha(g_{s})\otimes \alpha(g_{i})x\label{0.5} \\&&+\sum_{j=1}^{t}h_{1}\cdots \widehat{h_{j}}\cdots h_{t}\otimes h_{j}^{[p]}\wedge \alpha(g_{1})\wedge \cdots \wedge \alpha(g_{s})\otimes \alpha(x)\label{0.6}\\
&&-\sum_{j=1}^{t}h_{1}\cdots \widehat{h_{j}}\cdots h_{t}\otimes h_{j}\wedge \alpha(g_{1})\wedge \cdots \wedge \alpha(g_{s})\otimes h_{j}^{p-1}x\label{0.7}.
\end{eqnarray}
For $k\geq 1,$ we define the map $d_{k}: C_{k}\rightarrow
C_{k-1}$ by $d_{k}=\bigoplus_{2t+s=k}d_{s,t}.$
Then we obtain the following theorem.
\begin{theorem} \label{t1.3.160} The maps $d_{k}$ defined above satisfy $d_{k-1}d_{k}=0$ for $k\geq 1,$ so that $C=(C_{k},d_{k})$ is an augmented complex of free $u(g)$-modules.
\end{theorem}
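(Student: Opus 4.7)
The plan is to decompose the differential into two anti-commuting pieces and verify each squares to zero. Write $d_{s,t}=\delta_{s,t}+\partial_{s,t}$, where $\delta_{s,t}$ is the exterior (Koszul) part coming from line (0.5) and $\partial_{s,t}$ is the combined symmetric part coming from lines (0.6) and (0.7). Since $(\delta+\partial)^{2}=\delta^{2}+(\delta\partial+\partial\delta)+\partial^{2}$, it suffices to establish
$$\delta^{2}=0,\qquad \partial^{2}=0,\qquad \delta\partial+\partial\delta=0.$$

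For $\delta^{2}=0$, I would run the standard Koszul computation: applying $\delta$ twice produces a double sum indexed by ordered pairs $(i,k)$ of wedge positions with $i\neq k$; splitting into the cases $k<i$ and $k>i$, relabelling the dummy indices, and invoking the antisymmetry of the wedge, the two halves cancel pairwise. The algebraic input needed is that the relevant products of the $\alpha^{j}(g_{\ell})$'s commute in $u(L)$, which is guaranteed by the standing hypothesis $[g_{i},g_{j}]=0$ together with the multiplicativity of $\alpha$.

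For $\partial^{2}=0$, further decompose $\partial=\partial^{+}-\partial^{-}$, where $\partial^{+}$ is line (0.6) (inserting $h_{j}^{[p]}$ into position one of the wedge and twisting $x$ by $\alpha$) and $\partial^{-}$ is line (0.7) (inserting $h_{j}$ into position one and multiplying $x$ by $h_{j}^{p-1}$). Expanding $\partial^{2}$, both $(\partial^{+})^{2}$ and $(\partial^{-})^{2}$ vanish termwise: swapping the two selected symmetric indices $i,j$ swaps the two new wedge entries, contributing a sign from $h_{i}^{[p]}\wedge h_{j}^{[p]}=-h_{j}^{[p]}\wedge h_{i}^{[p]}$ (respectively $h_{i}\wedge h_{j}=-h_{j}\wedge h_{i}$), while the remaining symmetric factor $h_{1}\cdots\widehat{h_{i}}\widehat{h_{j}}\cdots h_{t}$ and the coefficient on $x$ are symmetric in $i,j$ (for $(\partial^{-})^{2}$ this uses that $h_{i},h_{j}$ commute in $u(L)$, since $L$ is abelian). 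The mixed terms are related by the same swap, yielding $\partial^{-}\partial^{+}=-\partial^{+}\partial^{-}$, and so they cancel.

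For $\delta\partial+\partial\delta=0$, I would pair, for each selected symmetric index $j$ and each selected wedge index $i$, the term from applying $\delta$ first and then $\partial$ against the term from applying $\partial$ first and then $\delta$. In the $\partial$-first order the newly inserted wedge entry sits at position one and shifts the indexing of the original wedge by one, producing an extra factor of $-1$ when $\alpha(g_{i})$ is subsequently removed by $\delta$; this cancels the $\delta$-first term. The auxiliary identities $\alpha(h_{j})=h_{j}$ and $\alpha(h_{j}^{[p]})=h_{j}^{[p]}$ (valid because $h_{j}\in L_{1}$ and the $p$-mapping preserves $L_{1}$) together with commutativity in $u(L)$ let one match the symmetric factor and the coefficient on $x$ in both orders. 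The main obstacle is the bookkeeping of signs, $\alpha$-twists, and positional shifts in the wedge; once the three cancellations above are verified, $d_{k-1}d_{k}=0$ follows at once from $(\delta+\partial)^{2}=0$.
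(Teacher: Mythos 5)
Your decomposition $d=\delta+\partial^{+}-\partial^{-}$ is the same organization of terms that the paper's proof uses, and your treatment of $\delta^{2}$, of $(\partial^{\pm})^{2}$, and of the mixed symmetric terms $\partial^{+}\partial^{-}+\partial^{-}\partial^{+}$ reproduces exactly the paper's pairwise cancellations. However, your argument for $\delta\partial+\partial\delta=0$ has a genuine gap: you pair only those terms indexed by a symmetric index $j$ \emph{and} an original wedge index $i$, i.e.\ terms in which $\delta$ subsequently removes one of the old entries $\alpha(g_{i})$. After $\partial^{\pm}$ has acted, the wedge has $s+1$ entries, and $\delta$ also acts on the newly inserted first entry ($h_{j}^{[p]}$ for $\partial^{+}$, $h_{j}$ for $\partial^{-}$). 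These diagonal terms,
$$h_{1}\cdots\widehat{h_{j}}\cdots h_{t}\otimes\alpha^{2}(g_{1})\wedge\cdots\wedge\alpha^{2}(g_{s})\otimes\alpha(h_{j}^{[p]})\alpha(x)
\quad\text{and}\quad
-\,h_{1}\cdots\widehat{h_{j}}\cdots h_{t}\otimes\alpha^{2}(g_{1})\wedge\cdots\wedge\alpha^{2}(g_{s})\otimes h_{j}^{p}\alpha(x)$$
(the paper's (\ref{1.1}) and (\ref{1.5})), are indexed by $j$ alone, have no partner in $\partial\delta$, and do not cancel within $\delta\partial^{+}+\partial^{+}\delta$ or within $\delta\partial^{-}+\partial^{-}\delta$ separately. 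They cancel only against each other, and only because $\alpha(h_{j}^{[p]})=h_{j}^{[p]}=h_{j}^{p}$ holds in the restricted hom-universal enveloping algebra $u(L)$. This is the single place in the entire computation where the restricted structure enters (everything else uses only commutativity of the abelian $L$ inside $u(L)$ and multiplicativity of $\alpha$); without this cancellation the claim $d_{k-1}d_{k}=0$ fails for an ordinary (non-restricted) enveloping algebra. Your outline is otherwise sound and coincides with the paper's term-by-term argument, so it is repaired by adding this one cancellation to your anticommutator computation.
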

\begin{proof} The terms in the sum (\ref{0.5}) are elements of $C_{t,s-1}$ whereas the terms in the
sums (\ref{0.6}) and (\ref{0.7}) lie in $C_{t-1,s+1}.$ Therefore, in order to compute $d_{k-1}d_{k}=0,$ we must
apply $d_{t,s-1}$ to (\ref{0.5}) and $d_{t-1,s+1}$ to (\ref{0.6}) and (\ref{0.7}). Applying $d_{t,s-1}$ to (5), we have

\begin{eqnarray*}&&d_{t,s}\big(\sum_{i=1}^{s}(-1)^{i-1}h_{1}\cdots h_{t}\otimes \alpha(g_{1})\wedge \cdots \widehat{\alpha(g_{i})} \cdots \wedge \alpha(g_{s})\otimes \alpha(g_{i})x \big)
\\&=&\sum_{i=1}^{s}(-1)^{i-1}\big(\sum_{\sigma<i}(-1)^{\sigma-1}h_{1}\cdots h_{t}\otimes \alpha^{2}(g_{1})\wedge \cdots \widehat{\alpha^{2}(g_{\sigma})}\cdots \widehat{\alpha^{2}(g_{i})}\cdots \wedge \alpha^{2}(g_{s})
\\&&\quad \otimes \alpha^{2}(g_{\sigma})(\alpha(g_{i})x)\\
&&+\sum_{\sigma>i}(-1)^{\sigma}h_{1}\cdots h_{t}\otimes \alpha^{2}(g_{1})\wedge \cdots \widehat{\alpha^{2}(g_{i})}\cdots \widehat{\alpha^{2}(g_{\sigma})}\cdots \wedge \alpha^{2}(g_{s})\otimes \alpha^{2}(g_{\sigma})(\alpha(g_{i})x)\\
&&+\sum_{j=1}^{t}h_{1}\cdots \widehat{h_{j}}\cdots h_{t}\otimes h_{j}^{[p]}\wedge \alpha^{2}(g_{1})\wedge \cdots \widehat{\alpha^{2}(g_{i})}\cdots \wedge \alpha^{2}(g_{s})\otimes \alpha(\alpha(g_{i})x)\\
&&-\sum_{j=1}^{t}h_{1}\cdots \widehat{h_{j}}\cdots h_{t}\otimes h_{j}\wedge \alpha^{2}(g_{1})\wedge \cdots \widehat{\alpha^{2}(g_{i})}\cdots \wedge \alpha^{2}(g_{s})\otimes h_{j}^{p-1}(\alpha(g_{i})x) \big)\\
&=&\sum_{i=1}^{s}(-1)^{i-1}\big(\sum_{\sigma<i}(-1)^{\sigma-1}h_{1}\cdots h_{t}\otimes \alpha^{2}(g_{1})\wedge \cdots \widehat{\alpha^{2}(g_{\sigma})}\cdots \widehat{\alpha^{2}(g_{i})}\cdots \wedge \alpha^{2}(g_{s})
\\&&\quad \otimes (\alpha(g_{\sigma})\alpha(g_{i}))\alpha(x)\\
&&+\sum_{\sigma>i}(-1)^{\sigma}h_{1}\cdots h_{t}\otimes \alpha^{2}(g_{1})\wedge \cdots \widehat{\alpha^{2}(g_{i})}\cdots \widehat{\alpha^{2}(g_{\sigma})}\cdots \wedge \alpha^{2}(g_{s})\otimes (\alpha(g_{\sigma})\alpha(g_{i}))\alpha(x)
\end{eqnarray*}
\begin{eqnarray}
&+&\sum_{j=1}^{t}h_{1}\cdots \widehat{h_{j}}\cdots h_{t}\otimes h_{j}^{[p]}\wedge \alpha^{2}(g_{1})\wedge \cdots \widehat{\alpha^{2}(g_{i})}\cdots \wedge \alpha^{2}(g_{s})\otimes \alpha(\alpha(g_{i})x)\label{0.8}\\
&-&\sum_{j=1}^{t}h_{1}\cdots \widehat{h_{j}}\cdots h_{t}\otimes h_{j}\wedge \alpha^{2}(g_{1})\wedge \cdots \widehat{\alpha^{2}(g_{i})}\cdots \wedge \alpha^{2}(g_{s})\otimes (h_{j}^{p-1}\alpha(g_{i}))\alpha(x) \big)\label{0.9}.
\end{eqnarray}
Since $\alpha(g_{i})\alpha(g_{j})=\alpha(g_{j})\alpha(g_{i})$ in $u(g),$ the terms in the first two sums
in the parentheses cancel in pairs when summed over all $i.$ This leaves the sum over
$i$ of (\ref{0.8}) and (\ref{0.9}). Now we apply $d_{t-1,s+1}$ to (\ref{0.6}).

\begin{eqnarray*}d_{t-1,s+1}\big(\sum_{j=1}^{t}h_{1}\cdots \widehat{h_{j}}\cdots h_{t}\otimes h_{j}^{[p]}\wedge \alpha(g_{1})\wedge  \cdots \wedge \alpha(g_{s})\otimes \alpha(x) \big)=\sum_{j=1}^{t}
\end{eqnarray*}
\begin{eqnarray}&&\big(\sum_{\sigma=1}^{s}(-1)^{\sigma}h_{1}\cdots \widehat{h_{j}}\cdots h_{t}\otimes h_{j}^{[p]}\wedge \alpha^{2}(g_{1})\wedge \cdots \widehat{\alpha^{2}(g_{\sigma})}\cdots \wedge \alpha^{2}(g_{s})
\otimes \alpha^{2}(g_{\sigma})\alpha(x)\label{1.0}\\
&&+h_{1}\cdots \widehat{h_{j}}\cdots h_{t}\otimes \alpha^{2}(g_{1})\wedge \cdots \wedge \alpha^{2}(g_{s})\otimes \alpha(h_{j}^{[p]})\alpha(x)\label{1.1}\\
&&+\sum_{\tau\neq j}h_{1}\cdots \widehat{h_{\tau}}\cdots \widehat{h_{j}}\cdots h_{t}\otimes h_{\tau}^{[p]}\wedge h_{j}^{[p]}\wedge \alpha^{2}(g_{1})\wedge \cdots \wedge \alpha^{2}(g_{s})\otimes \alpha^{2}(x)\label{1.2}\\
&&-\sum_{\tau\neq j}h_{1}\cdots \widehat{h_{\tau}}\cdots \widehat{h_{j}}\cdots h_{t}\otimes h_{\tau}\wedge h_{j}^{[p]}\wedge \alpha^{2}(g_{1})\wedge \cdots \wedge \alpha^{2}(g_{s})\otimes h_{\tau}^{p-1}\alpha(x)\big)\label{1.3}.
\end{eqnarray}
We note that the terms in (\ref{1.2}) cancel in pairs since interchanging the first two terms
in the alternating product multiplies the term by $-1$. Finally, we apply $d_{t-1,s+1}$ to
(\ref{0.7}) to get
\begin{eqnarray*}&&d_{t-1,s+1}\big(-\sum_{j=1}^{t}h_{1}\cdots \widehat{h_{j}}\cdots h_{t}\otimes h_{j}\wedge \alpha^{2}(g_{1})\wedge  \cdots \wedge \alpha(g_{s})\otimes h_{j}^{p-1}x \big)
\\&&=-\sum_{j=1}^{t}\big(\sum_{\sigma=1}^{s}(-1)^{\sigma}h_{1}\cdots \widehat{h_{j}}\cdots h_{t}\otimes h_{j}\wedge \alpha^{2}(g_{1})\wedge \cdots \wedge \alpha^{2}(g_{s})\otimes \alpha^{2}(g_{\sigma})(h_{j}^{p-1}x) \\
&&\quad+h_{1}\cdots \widehat{h_{j}}\cdots h_{t}\otimes \alpha^{2}(g_{1})\wedge \cdots \wedge \alpha^{2}(g_{s})\otimes \alpha^{2}(h_{j})(h_{j}^{p-1}x)\\
&&\quad+\sum_{\tau\neq j}h_{1}\cdots \widehat{h_{\tau}}\cdots \widehat{h_{j}}\cdots h_{t}\otimes h_{\tau}^{[p]}\wedge h_{j}\wedge \alpha^{2}(g_{1})\wedge \cdots \wedge \alpha^{2}(g_{s})\otimes \alpha(h_{j}^{p-1}x)\\
&&\quad-\sum_{\tau\neq j}h_{1}\cdots \widehat{h_{\tau}}\cdots \widehat{h_{j}}\cdots h_{t}\otimes h_{\tau}\wedge h_{j}\wedge \alpha^{2}(g_{1})\wedge \cdots \wedge \alpha^{2}(g_{s})\otimes h_{\tau}^{p-1}(h_{j}^{p-1}x) \big)
\end{eqnarray*}
\begin{eqnarray}&&
=-\sum_{j=1}^{t}\big(\sum_{\sigma=1}^{s}(-1)^{\sigma}h_{1}\cdots \widehat{h_{j}}\cdots h_{t}\otimes h_{j}\wedge \alpha^{2}(g_{1})\wedge \cdots \wedge \alpha^{2}(g_{s})\otimes \alpha(g_{\sigma}h_{j}^{p-1})\alpha(x) \label{1.4}\\
&&\quad+h_{1}\cdots \widehat{h_{j}}\cdots h_{t}\otimes \alpha^{2}(g_{1})\wedge \cdots \wedge \alpha^{2}(g_{s})\otimes h_{j}^{p}\alpha(x) \label{1.5}\\
&&\quad+\sum_{\tau\neq j}h_{1}\cdots \widehat{h_{\tau}}\cdots \widehat{h_{j}}\cdots h_{t}\otimes h_{\tau}^{[p]}\wedge h_{j}\wedge \alpha^{2}(g_{1})\wedge \cdots \wedge \alpha^{2}(g_{s})\otimes \alpha(h_{j}^{p-1}x) \label{1.6}\\
&& \, -\sum_{\tau\neq j}h_{1}\cdots \widehat{h_{\tau}}\cdots \widehat{h_{j}}\cdots h_{t}\otimes h_{\tau}\wedge h_{j}\wedge \alpha^{2}(g_{1})\wedge \cdots \wedge \alpha^{2}(g_{s})\otimes (h_{\tau}^{p-1}h_{j}^{p-1})\alpha(x) \big) \label{1.7}.
\end{eqnarray}
This time the terms in (\ref{1.7}) cancel in pairs. Moreover, the terms in (\ref{0.8}) and (\ref{1.0})
are identical (with $\sigma=i$) except for sign and hence they cancel. The terms in (\ref{0.9})
and (\ref{1.4}) cancel in pairs since $\alpha(h_{i}^{p-1})
\alpha(g_{j})=\alpha(g_{j})\alpha(h_{i}^{p-1}).$ The terms in (\ref{1.3}) and (\ref{1.6})
have the same sign but are equal apart from interchanging the first two terms in the
alternating part. Finally the terms in (\ref{1.1}) and (\ref{1.5}) match except for sign since
$h_{j}^{[p]}=h_{j}^{p}$ in $u(g)$ and hence the entire sum is zero as claimed.
This completes the proof.
\end{proof}

We next will consider the cohomology  of restricted hom-Lie algebras in the case of simpleness.
A basis for the space $C_{t,s}$ consists of the monomials
$$e^{\mu}\otimes e_{I}\otimes e^{r}=e_{1}^{\mu_{1}}\cdots e_{n}^{\mu_{n}}\otimes e_{i_{1}}\wedge \cdots \wedge e_{i_{s}}\otimes e_{1}^{r_{1}}\cdots e_{n}^{r_{n}},
$$
where $\mu=(\mu_{1},\cdots,\mu_{n}), I=(i_{1},\cdots, i_{s}), r=(r_{1},\cdots,r_{n})$ and
$$\mu_{j}\geq 0, |\mu|=\sum_{j}\mu_{j}=t, 1\leq i_{1}<\cdots<i_{s}\leq n, 0\leq r_{j}\leq p-1.$$
For each $i=1,\cdots, n$ and $e_{i}\in L_{1},$ we let $$c_{i}=1\otimes e_{i}^{[p]}\otimes 1-1\otimes e_{i}\otimes e_{i}^{p-1}$$
and we easily note that $c_{i}\in C_{0,1}$ is a cycle for all $i.$ Now we define
$$(\partial/\partial e_{i}\otimes c_{i}):C_{t,s}\longrightarrow C_{t-1,s+1}$$
by the formula
$$(\frac{\partial}{\partial e_{i}}\otimes c_{i})(e^{\mu}\otimes e_{I}\otimes e^{r})=\frac{\partial e^{\mu}}{\partial e_{i}}\otimes e_{i}^{[p]}\wedge \alpha(e_{I})\otimes \alpha(e^{r})-\frac{\partial e^{\mu}}{\partial e_{i}}\otimes e_{i}\wedge \alpha(e_{I})\otimes e_{i}^{p-1}\alpha(e^{r}).$$
If $\mu=(\mu_{1},\cdots,\mu_{n})$ satisfies $|\mu|=t$ and $I=(i_{1},\cdots,i_{s})$ is increasing, then by the definition we write
$$e^{\mu}\otimes c_{I}=\sum_{J\subset \{1,\cdots,s\}}(-1)^{|J|}e^{\mu}\otimes f_{i_{1}}\wedge \cdots \wedge f_{i_{s}}\otimes e_{i_{1}}^{q_{i_{1}}}\cdots e_{i_{s}}^{q_{i_{s}}}$$
and $$e^{\mu}\otimes \alpha(c_{I})=\sum_{J\subset \{1,\cdots,s\}}(-1)^{|J|}e^{\mu}\otimes \alpha(f_{i_{1}})\wedge \cdots \wedge \alpha(f_{i_{s}})\otimes \alpha(e_{i_{1}}^{q_{i_{1}}})\cdots \alpha(e_{i_{s}}^{q_{i_{s}}}),$$
where
\[
f_{i_{j}}=\left\{
\begin{array}{lll}
e_{i_{j}},& j\in J \\
e_{i_{j}}^{[p]},& j\notin J;
\end{array}
\right. \quad q_{i_{j}}=\left\{
\begin{array}{lll}
p-1,& j\in J  \\
0,& j\notin J.
\end{array}
\right.
\]
We then define $\mathfrak{C}_{t,s}$ to be the $\mathbb{F}$-subspace of $C_{t,s}$ spanned by the elements
$\{e^{\mu}\otimes \alpha(c_{I}): |\mu|=t$ and $I$ is increasing $\}$
and $$\mathfrak{C}_{k}=\bigoplus_{2t+s=k}\mathfrak{C}_{t,s}.$$
The boundary operator $\partial_{k}=\partial:\mathfrak{C}_{k}\longrightarrow \mathfrak{C}_{k-1}$ is defined by
$$\partial=\sum_{j=1}^{n}\frac{\partial}{\partial e_{j}}\otimes c_{j}.$$
Then we may show that $\partial^{2}=0.$
In fact,
\begin{eqnarray*}&&\partial^{2}(e^{\mu}\otimes c_{I})=\partial(\partial(e^{\mu}\otimes c_{I}))
\\&&=\partial(\sum_{j=1}^{n}\frac{\partial}{\partial e_{j}}\otimes c_{j}(\sum_{J\subset \{1,\cdots,s\}}(-1)^{|J|}e^{\mu}\otimes f_{i_{1}}\wedge \cdots \wedge f_{i_{s}}\otimes e_{i_{1}}^{q_{i_{1}}}\cdots e_{i_{s}}^{q_{i_{s}}}))
\\&&=\partial(\sum_{j=1}^{n}\sum_{J\subset \{1,\cdots,s\}}(-1)^{|J|}(\frac{\partial e^{\mu}}{\partial e_{j}}\otimes e_{j}^{[p]}\wedge \alpha(f_{i_{1}})\wedge \cdots \wedge \alpha(f_{i_{s}})\otimes \alpha(e_{i_{1}}^{q_{i_{1}}})\cdots \alpha(e_{i_{s}}^{q_{i_{s}}})\\&& \quad-\frac{\partial e^{\mu}}{\partial e_{j}}\otimes e_{j}\wedge \alpha(f_{i_{1}})\wedge \cdots \wedge \alpha(f_{i_{s}})\otimes e_{j}^{p-1}\alpha(e_{i_{1}}^{q_{i_{1}}})\cdots \alpha(e_{i_{s}}^{q_{i_{s}}})))
\\&&=\sum_{l=1}^{n}\sum_{j=1}^{n}\sum_{J\subset \{1,\cdots,s\}}(-1)^{|J|}\{\frac{\partial}{\partial e_{l}}\otimes c_{l}(\frac{\partial e^{\mu}}{\partial e_{j}}\otimes e_{j}^{[p]}\wedge \alpha(f_{i_{1}})\wedge \cdots \wedge \alpha(f_{i_{s}})
\otimes \alpha(e_{i_{1}}^{q_{i_{1}}})\cdots
\\&& \quad\quad\alpha(e_{i_{s}}^{q_{i_{s}}}))
-\frac{\partial}{\partial e_{l}}\otimes c_{l}(\frac{\partial e^{\mu}}{\partial e_{j}}\otimes e_{j}\wedge \alpha(f_{i_{1}})\wedge \cdots \wedge \alpha(f_{i_{s}})\otimes e_{j}^{p-1}\alpha(e_{i_{1}}^{q_{i_{1}}})\cdots \alpha(e_{i_{s}}^{q_{i_{s}}}))\}
\\&& \quad=\sum_{l=1}^{n}\sum_{j=1}^{n}\sum_{J\subset \{1,\cdots,s\}}(-1)^{|J|}\{\frac{\partial(\frac{\partial e^{\mu}}{\partial e_{j}})}{\partial e_{l}}\otimes e_{l}^{[p]}\wedge \alpha(e_{j}^{[p]})\wedge \alpha^{2}(f_{i_{1}})\wedge \cdots \wedge \alpha^{2}(f_{i_{s}})
\end{eqnarray*}
\begin{eqnarray}&&\qquad\qquad\qquad\qquad\qquad\quad\otimes \alpha^{2}(e_{i_{1}}^{q_{i_{1}}})\cdots \alpha^{2}(e_{i_{s}}^{q_{i_{s}}})\label{1.8}
\\&& \quad-\frac{\partial(\frac{\partial e^{\mu}}{\partial e_{j}})}{\partial e_{l}}\otimes e_{l}\wedge \alpha(e_{j}^{[p]})\wedge \alpha^{2}(f_{i_{1}})\wedge \cdots \wedge \alpha^{2}(f_{i_{s}})\otimes e_{l}^{p-1}\alpha^{2}(e_{i_{1}}^{q_{i_{1}}})\cdots \alpha^{2}(e_{i_{s}}^{q_{i_{s}}})\label{1.9}
\\&&\quad-\frac{\partial(\frac{\partial e^{\mu}}{\partial e_{j}})}{\partial e_{l}}\otimes e_{l}^{[p]}\wedge \alpha(e_{j})\wedge \alpha^{2}(f_{i_{1}})\wedge \cdots \wedge \alpha^{2}(f_{i_{s}})\otimes \alpha(e_{j}^{p-1})\alpha^{2}(e_{i_{1}}^{q_{i_{1}}})\cdots \alpha^{2}(e_{i_{s}}^{q_{i_{s}}})\label{2.0}
\end{eqnarray}
\begin{eqnarray*}&&\quad\quad \,+\frac{\partial(\frac{\partial e^{\mu}}{\partial e_{j}})}{\partial e_{l}}\otimes e_{l}\wedge \alpha(e_{j})\wedge \alpha^{2}(f_{i_{1}})\wedge \cdots \wedge \alpha^{2}(f_{i_{s}})\otimes e_{l}^{p-1}\alpha(e_{j}^{p-1})\alpha^{2}(e_{i_{1}}^{q_{i_{1}}})\cdots \alpha^{2}(e_{i_{s}}^{q_{i_{s}}})\}.
\end{eqnarray*}
\begin{eqnarray}\label{2.1}
\end{eqnarray}
This time the terms in (\ref{1.8}) cancel in pairs, and the terms in (\ref{2.1}) cancel in pairs since $e_{l}^{p-1}\alpha(e_{j}^{p-1})=\alpha(e_{l}^{p-1})e_{j}^{p-1}$. Moreover, the terms in (\ref{1.9}) and (\ref{2.0}) are identical except for sign and hence they cancel,
so that $\mathfrak{C}=\{\mathfrak{C}_{k},\partial_{k}\}_{k\geq 0}$ is a complex.

\begin{theorem} \label{t1.3.1600} If $\mathfrak{C}$ is the complex defined above, we define $H_{k}(\mathfrak{C}):=\mathrm{Ker}\partial_{k}/\mathrm{Im}\partial_{k}.$ Then
\[
H_{k}(\mathfrak{C})=\left\{
\begin{array}{lll}
U_{res.}(g), & k=0 \\
0,& 0<k<p.
\end{array}
\right.
\]
\end{theorem}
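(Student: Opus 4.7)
The strategy is to exploit the abelian hypothesis $[g_{i}, g_{j}]=0$, which makes $u(L)$ commutative and presents $\mathfrak{C}$ as a Koszul-type complex factoring as a tensor product of one-variable subcomplexes.

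For $k=0$, the $u(L)$-module $\mathfrak{C}_0=\mathfrak{C}_{0,0}$ is generated by $1\otimes 1\otimes 1$ and identifies with $u(L)=U_{res.}(g)$. Every element of $\mathfrak{C}_1=\mathfrak{C}_{0,1}$ has constant polynomial part, so the derivations $\partial/\partial e_j$ annihilate it and $\partial_1\equiv 0$; hence $H_0(\mathfrak{C})=U_{res.}(g)$.

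For $0<k<p$, I would reduce to the one-dimensional case. Fix a basis $e_1,\dots,e_n$ of $L_1$. Because $L$ is abelian and $\alpha$ is multiplicative, one has $u(L)\cong\bigotimes_i u(\mathbb{F} e_i)$, $S^{\ast}\overline{L}_1\cong\bigotimes_i S^{\ast}\overline{\mathbb{F} e_i}$, and $\Lambda^{\ast}L\cong\bigotimes_i \Lambda^{\ast}(\mathbb{F} e_i)$. Each cycle $c_i$ lies entirely in the $i$th factor, so these isomorphisms assemble into an isomorphism of complexes
\[
\mathfrak{C}(L)\;\cong\;\mathfrak{C}(\mathbb{F} e_1)\otimes\cdots\otimes\mathfrak{C}(\mathbb{F} e_n),
\]
with total boundary $\partial_L=\sum_i \mathrm{id}\otimes\cdots\otimes\partial_i\otimes\cdots\otimes\mathrm{id}$. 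In each one-dimensional factor $\mathfrak{C}(\mathbb{F} e)$, both $\mathfrak{C}_{t,0}$ and $\mathfrak{C}_{t,1}$ are free of rank one over $u(\mathbb{F} e)$, and a direct calculation (already implicit in the verification of $\partial^2=0$) yields $\partial(e^{t}\otimes 1)=t\,e^{t-1}\otimes\alpha(c)$ and $\partial(e^{t}\otimes\alpha(c))=0$, using $\alpha(e)=e$ to get $\alpha(c)=c$. For $1\le t\le p-1$, the scalar $t$ is a unit in $\mathbb{F}$, so the differentials are alternately isomorphisms and zero, giving $H_k(\mathfrak{C}(\mathbb{F} e))=0$ for every $1\le k<p$.

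A final application of the K\"unneth formula over the field $\mathbb{F}$ then closes the argument: for $0<k<p$, every summand of
\[
H_k(\mathfrak{C}(L))=\bigoplus_{k_1+\cdots+k_n=k}H_{k_1}(\mathfrak{C}(\mathbb{F} e_1))\otimes\cdots\otimes H_{k_n}(\mathfrak{C}(\mathbb{F} e_n))
\]
contains at least one factor $H_{k_j}$ with $1\le k_j<p$, which vanishes by the one-dimensional analysis. I expect the main obstacle to be verifying the tensor factorization in the reduction step, since the definition of $\mathfrak{C}_{t,s}$ involves the twisted $u(L)$-action through $\alpha$ and the explicit expansion of $\alpha(c_I)$; once that identification is pinned down and the signs in $\partial$ are matched, the remaining steps are routine bookkeeping and a standard invocation of K\"unneth.
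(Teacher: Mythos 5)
Your one--variable computation and the K\"unneth bookkeeping at the end are fine, but the reduction step that you yourself flag as ``the main obstacle'' is where the argument actually breaks, and it breaks for a structural reason rather than a notational one. The cycle $c_{i}=1\otimes e_{i}^{[p]}\otimes 1-1\otimes e_{i}\otimes e_{i}^{p-1}$ involves $e_{i}^{[p]}$, and the $p$-mapping is only $p$-semilinear: there is no reason for $e_{i}^{[p]}$ to lie in $\mathbb{F}e_{i}$, so $c_{i}$ does \emph{not} lie in the $i$th factor, and indeed $\mathbb{F}e_{i}$ need not even be a restricted subalgebra, so $u(\mathbb{F}e_{i})$ and $\mathfrak{C}(\mathbb{F}e_{i})$ are not defined. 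Already the first isomorphism $u(L)\cong\bigotimes_{i}u(\mathbb{F}e_{i})$ fails: for $n=2$ with $e_{1}^{[p]}=e_{2}$, $e_{2}^{[p]}=0$ one gets $u(L)\cong\mathbb{F}[e_{1}]/(e_{1}^{p^{2}})$, which is not a tensor product of one-variable truncated algebras. Nor can you repair this by a better choice of basis: a nilpotent $p$-semilinear map (such as $e_{1}\mapsto e_{2}\mapsto 0$) cannot be diagonalized, so no basis makes the $c_{i}$ ``separate variables.'' A genuinely different decomposition (treating the $c_{i}$ themselves as formal exterior generators of a Koszul complex with polynomial part $S^{\ast}\overline{L}_{1}$, independently of where $e_{i}^{[p]}$ lands) could rescue the idea, but that identification is exactly what you have not supplied.

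The paper avoids the issue entirely by producing an explicit contracting homotopy: it defines
$D(e^{\mu}\otimes \alpha(c_{I}))=\sum_{a}(-1)^{a-1}e^{\mu}e_{i_{a}}\otimes c_{i_{1}}\cdots \widehat{c_{i_{a}}}\cdots c_{i_{s}}$
and computes $(D\partial+\partial D)(e^{\mu}\otimes\alpha(c_{I}))=(t+s)(e^{\mu}\otimes\alpha(c_{I}))$ on $\mathfrak{C}_{k}$ with $k=2t+s$. Since $0<k<p$ forces $0<t+s<p$, the scalar $t+s$ is invertible and every cycle is a boundary; the $k=0$ case is handled exactly as in your first paragraph. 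This Euler-operator argument uses the same elementary computation that powers your one-dimensional case ($\partial$ differentiates $e^{\mu}$ and multiplies by $c_{j}$) but requires no compatibility whatsoever between the chosen basis and the $p$-mapping, which is precisely the hypothesis your tensor factorization would need and does not have.
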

\begin{proof} Define a map $D: \mathfrak{C}_{k}\rightarrow \mathfrak{C}_{k+1}$ by the formula
$$D(e^{\mu}\otimes \alpha(c_{I}))=\sum\limits_{a=1}^{s}(-1)^{a-1}e^{\mu}e_{i_{a}}\otimes c_{i_{1}}\cdots \widehat{c_{i_{a}}}\cdots c_{i_{s}}$$
and compute for any monomial $e^{\mu}\otimes \alpha(c_{I}):$

$D\partial(e^{\mu}\otimes  \alpha(c_{I}))=D(\sum\limits_{j=1}^{n}(\frac{\partial}{\partial e_{j}}\otimes c_{j})(e^{\mu}\otimes  \alpha(c_{I})))$
\begin{eqnarray*}&&=\sum\limits_{j=1,{j\neq i_{1},\cdots,i_{s}}}^{n}D(\mu_{j}e_{1}^{\mu_{1}}\cdots e_{j}^{\mu_{j}-1}\cdots e_{n}^{\mu_{n}}\otimes c_{j}\alpha^{2}(c_{I}))
\\&&=\sum\limits_{j=1,{j\neq i_{1},\cdots,i_{s}}}^{n}D(\mu_{j}e_{1}^{\mu_{1}}\cdots e_{j}^{\mu_{j}-1}\cdots e_{n}^{\mu_{n}}\otimes \alpha(c_{j})\alpha^{2}(c_{I}))
\\&&=\big(\sum\limits_{{j=1,}{j\neq i_{1},\cdots,i_{s}}}^{n}\mu_{j}\big)e^{\mu}\otimes \alpha(c_{I})
\\&&+\sum\limits_{{j=1,}{j\neq i_{1},\cdots,i_{s}}}^{n}\sum_{a=1}^{s}(-1)^{a}\mu_{j}e_{1}^{\mu_{1}}\cdots e_{j}^{\mu_{j}-1}\cdots e_{i_{a}}^{\mu_{i_{a}}+1}\cdots e_{n}^{\mu_{n}}
\end{eqnarray*}
\begin{eqnarray} \otimes \alpha(c_{j})\alpha(c_{i_{1}})\cdots \widehat{\alpha(c_{i_{a}})}\cdots \alpha(c_{i_{s}})\label{2.2}
\end{eqnarray}
and
$\partial D(e^{\mu}\otimes \alpha(c_{I}))=\partial(\sum\limits_{a=1}^{s}(-1)^{a-1}e^{\mu}e_{i_{a}}\otimes c_{i_{1}}\cdots \widehat{c_{i_{a}}}\cdots c_{i_{s}})$
\begin{eqnarray*}&&=\sum\limits_{a=1}^{s}(-1)^{a-1}\partial(e_{1}^{\mu_{1}}\cdots e_{i_{a}}^{\mu_{i_{a}}+1}\cdots e_{n}^{\mu_{n}}\otimes c_{i_{1}}\cdots \widehat{c_{i_{a}}}\cdots c_{i_{s}})
\\&&=\big(\sum\limits_{a=1}^{s}\mu_{i_{a}}+1\big)e^{\mu}\otimes \alpha(c_{I})
\\&&-\sum_{a=1}^{s}(-1)^{a}\sum\limits_{{j=1,}{j\neq i_{1},\cdots,i_{s}}}^{n}\mu_{j}e_{1}^{\mu_{1}}\cdots e_{j}^{\mu_{j}-1}\cdots e_{i_{a}}^{\mu_{i_{a}}+1}\cdots e_{n}^{\mu_{n}}
\end{eqnarray*}
\begin{eqnarray}\otimes \alpha(c_{j})\alpha(c_{i_{1}})\cdots \widehat{\alpha(c_{i_{a}})}\cdots \alpha(c_{i_{s}})\label{2.3}.
\end{eqnarray}
Clearly the terms (\ref{2.2}) and (\ref{2.3}) are identical apart from sign so that we have
$$(D\partial+\partial D)(e^{\mu}\otimes \alpha(c_{I}))=\big(\sum\limits_{{j=1,}{j\neq i_{1},\cdots,i_{s}}}^{n}\mu_{j}+\sum\limits_{a=1}^{s}\mu_{i_{a}}+s\big)(e^{\mu}\otimes \alpha(c_{I}))=(t+s)(e^{\mu}\otimes \alpha(c_{I})).$$
Therefore we see that every cycle in $\mathfrak{C}_{k}(k=2t+s)$ is a boundary provided that $t+s\neq 0 (\mathrm{mod} p).$ In particular, if $0<k<p,$ then $0<t+s<p$
so that $H_{k}(\mathfrak{C})=0.$ Moreover, $\mathfrak{C}_{1}=\mathfrak{C}_{0,1}$ is spanned by the $c_{i}$ and $\partial c_{i}=0$ for all $i.$ Therefore $H_{0}(\mathfrak{C})=\mathfrak{C}_{0}=U_{res.}(g),$ the proof of the theorem is complete.
\end{proof}

\end{document}